\documentclass[final,leqno]{siamltex704}
\usepackage{amsmath}
\usepackage{graphicx}
\usepackage[notcite,notref]{showkeys}
\usepackage{mathrsfs}
\usepackage{float}
\usepackage{amsfonts,amssymb}
\usepackage{dsfont}
\usepackage{pifont}
\usepackage{hyperref}
\usepackage{multirow}
\numberwithin{equation}{section}
\def\3bar{{|\hspace{-.02in}|\hspace{-.02in}|}}
\def\E{{\mathcal{E}}}
\def\T{{\mathcal{T}}}
\def\Q{{\mathcal{Q}}}
\def\btau{\boldsymbol{\tau}}
\def\w{\psi}
\def\bw{{\mathbf{w}}}
\def\bu{{\mathbf{u}}}
\def\bv{{\mathbf{v}}}
\def\bn{{\mathbf{n}}}
\def\be{{\mathbf{e}}}
\newtheorem{defi}{Definition}[section]

\newtheorem{algorithm}{Weak Galerkin Algorithm}
\setlength{\parindent}{0.25in} \setlength{\parskip}{0.08in}

\title { An Efficient Numerical Scheme for the Biharmonic Equation by
Weak Galerkin Finite Element Methods on Polygonal or Polyhedral
Meshes}

\author{
Chunmei Wang\thanks{Jiangsu Key Laboratory for NSLSCS, School of
Mathematical Sciences, Nanjing Normal University, Nanjing 210023,
China. Second affiliation: Nanjing Normal University Taizhou
College, Taizhou 225300, China. The research of Chunmei Wang was
partially supported by {\em The Project of Graduate Education
Innovation of Jiangsu Province (CXZZ13\_0387).}} \and Junping
Wang\thanks{Division of Mathematical Sciences, National Science
Foundation, Arlington, VA 22230 (jwang@nsf.gov). The research of
Junping Wang was supported by the NSF IR/D program, while working at
National Science Foundation. However, any opinion, finding, and
conclusions or recommendations expressed in this material are those
of the author and do not necessarily reflect the views of the
National Science Foundation.}}

\begin{document}

\maketitle

\begin{abstract}
This paper presents a new and efficient numerical algorithm for the
biharmonic equation by using weak Galerkin (WG) finite element
methods. The WG finite element scheme is based on a variational form
of the biharmonic equation that is equivalent to the usual
$H^2$-semi norm. Weak partial derivatives and their approximations,
called discrete weak partial derivatives, are introduced for a class
of discontinuous functions defined on a finite element partition of
the domain consisting of general polygons or polyhedra. The discrete
weak partial derivatives serve as building blocks for the WG finite
element method. The resulting matrix from the WG method is
symmetric, positive definite, and parameter free. An error estimate
of optimal order is derived in an $H^2$-equivalent norm for the WG
finite element solutions. Error estimates in the usual $L^2$ norm
are established, yielding optimal order of convergence for all the
WG finite element algorithms except the one corresponding to the
lowest order (i.e., piecewise quadratic elements). Some numerical
experiments are presented to illustrate the efficiency and accuracy
of the numerical scheme.
\end{abstract}

\begin{keywords} weak Galerkin, finite element methods,
 weak partial derivatives,  biharmonic equation, polyhedral meshes.
\end{keywords}

\begin{AMS}
Primary, 65N30, 65N15, 65N12, 74N20; Secondary, 35B45, 35J50, 35J35
\end{AMS}

\pagestyle{myheadings}

\section{Introduction}

This paper is concerned with new developments of numerical methods
for the biharmonic equation with Dirichlet and Neumann boundary
conditions. The model problem seeks an unknown function $u=u(x)$
satisfying
\begin{equation}\label{0.1}
\begin{split}
\Delta^2u&=f, \quad\text{in}\ \Omega,\\
u&=\xi, \quad\text{on}\ \partial\Omega,\\
\frac{\partial u}{\partial\textbf{n}}&=\nu, \quad  \text{on}\ \partial\Omega,\\
\end{split}
\end{equation}
where $\Omega$ is an open bounded domain in $\mathbb{R}^d$($d=2,3$)
with a Lipschitz continuous boundary $\partial\Omega$. The functions
$f$, $\xi$, and $\nu$ are given on the domain or its boundary, as
appropriate.

A variational formulation for the biharmonic problem (\ref{0.1}) is
given by seeking $u\in H^2(\Omega)$ satisfying $u|_{\partial
\Omega}=\xi$, $\frac{\partial u}{\partial \textbf{n}}|_{\partial
\Omega}=\nu$ and the following equation
\begin{equation}\label{0.2}
\sum_{i,j=1}^d(\partial^2_{ij}u,\partial^2_{ij}v)=(f,v), \quad
\forall v\in H_0^2(\Omega),
\end{equation}
where $(\cdot,\cdot)$ stands for the usual inner product in
$L^2(\Omega)$, $\partial_{ij}^2$ is the second order partial
derivative in the direction $x_i$ and $x_j$, and $H_0^2(\Omega)$ is
the subspace of the Sobolev space $H^2(\Omega)$ consisting of
functions with vanishing trace for the function itself and its
gradient.

Based on the variational form (\ref{0.2}), one may design various
conforming finite element schemes for (\ref{0.1}) by constructing
finite element spaces as subspaces of $H^2(\Omega)$. Such
$H^2$-conforming methods essentially require $C^1$-continuity for
the underlying piecewise polynomials (known as finite element
functions) on a prescribed finite element partition. The
$C^1$-continuity imposes an enormous difficulty in the construction
of the corresponding finite element functions in practical
computation. Due to the complexity in the construction of
$C^1$-continuous elements, $H^2$-conforming finite element methods
are rarely used in practice for solving the biharmonic equation.

As an alternative approach, nonconforming and discontinuous Galerkin
finite element methods have been developed for solving the
biharmonic equation over the last several decades. The Morley
element \cite{m1968} is a well-known example of nonconforming
element for the biharmonic equation by using piecewise quadratic
polynomials. Recently, a $C^0$ interior penalty method was studied
in \cite{bs2005, eghlmt2002}. In \cite{mb2007}, a hp-version
interior-penalty discontinuous Galerkin method was developed for the
biharmonic equation. To avoid the use of $C^1$-elements, mixed
methods have been developed for the biharmonic equation by reducing
the fourth order problem to a system of two second order equations
\cite{ab1985, f1978, gnp2008, m1987, mwy3818}.

Recently, weak Galerkin (WG) has emerged as a new finite element
technique for solving partial differential equations. WG method
refers to numerical techniques for partial differential equations
where differential operators are interpreted and approximated as
distributions over a set of generalized functions. The method/idea
was first introduced in \cite{wy2013} for second order elliptic
equations, and the concept was further developed in \cite{wy3655,
wy2707, mwy3655}. By design, WG uses generalized and/or
discontinuous approximating functions on general meshes to overcome
the barrier in the construction of ``smooth'' finite element
functions. In \cite{mwy0927}, a WG finite element method was
introduced and analyzed for the biharmonic equation by using
polynomials of degree $k\ge2$ on each element plus polynomials of
degree $k$ and $k-1$ for $u$ and $\frac{\partial u}{\partial\bn}$ on
the boundary of each element (i.e.,  elements of type
$P_k/P_{k}/P_{k-1}$). The WG scheme of \cite{mwy0927} is based on
the variational form of $(\Delta u, \Delta v)=(f,v)$.

In this paper, we will develop a highly flexible and robust WG
finite element method for the biharmonic equation by using an
element of type $P_k/P_{k-2}/P_{k-2}$; i.e., polynomials of degree
$k$ on each element and polynomials of degree $k-2$ on the boundary
of the element for $u$ and $\nabla u$. Our WG finite element scheme
is based on the variational form (\ref{0.2}), and has a smaller
number of unknowns than that of \cite{mwy0927} for the same order of
element. Intuitively, our WG finite element scheme for (\ref{0.1})
shall be derived by replacing the differential operator
$\partial_{ij}^2$ in (\ref{0.2}) by a discrete and weak version,
denoted by $\partial_{ij,w}^2$. In general, such a straightforward
replacement may not produce a working algorithm without including a
mechanism that enforces a certain weak continuity of the underlying
approximating functions. A weak continuity shall be realized by
introducing an appropriately defined stabilizer, denoted as
$s(\cdot,\cdot)$. Formally, our WG finite element method for
(\ref{0.1}) can be described by seeking a finite element function
$u_h$ satisfying
\begin{equation}\label{0.3}
\sum_{i,j=1}^d(\partial_{ij,w}^2u_h,\partial^2_{ij,w}v)_h+s(u_h,v)=(f,v)
\end{equation}
for all testing functions $v$. The main advantage of the present
approach as compared to \cite{mwy0927} lies in the fact that
elements of type $P_k/P_{k-2}/P_{k-2}$ are employed, which greatly
reduces the degrees of freedom and results in a smaller system to
solve. The rest of the paper is to specify all the details for
(\ref{0.3}), and justifies the rigorousness of the method by
establishing a mathematical convergence theory.

The paper is organized as follows. In Section
\ref{Section:Preliminaries}, we introduce some standard notations
for Sobolev spaces. Section \ref{Section:Wpartial} is devoted to a
discussion of weak partial derivatives and their discretizations. In
Section \ref{Section:WGFEM}, we present a weak Galerkin algorithm
for the biharmonic equation (\ref{0.1}). In Section
\ref{Section:L2projection}, we introduce some local $L^2$ projection
operators and then derive some approximation properties which are
useful in the convergence analysis. Section
\ref{Section:error-equation} will be devoted to the derivation of an
error equation for the WG finite element solution. In Section
\ref{Section:H2error}, we establish an optimal order of error
estimate for the WG finite element approximation in a
$H^2$-equivalent discrete norm. In Section \ref{Section:L2error}, we
shall derive an error estimate for the WG finite element method
approximation in the usual $L^2$-norm. Finally in Section
\ref{Section:NE}, we present some numerical results to demonstrate
the efficiency and accuracy of our WG method.

\section{Preliminaries and Notations}\label{Section:Preliminaries}
Let $D$ be any open bounded domain with Lipschitz continuous
boundary in $\mathbb{R}^d$, $d=2, 3$. We use the standard definition
for the Sobolev space $H^s(D)$ and the associated inner product
$(\cdot,\cdot)_{s,D}$, norm $\|\cdot\|_{s,D}$, and seminorm
$|\cdot|_{s,D}$ for any $s\geq 0$. For example, for any integer
$s\geq 0$, the seminorm $|\cdot|_{s,D}$ is given by
$$
|v|_{s,D}=\left(\sum_{|\alpha|=s}\int_D|\partial^\alpha
v|^2dD\right)^{\frac{1}{2}}
$$
with the usual notation
$$
\alpha=(\alpha_1,\cdots,\alpha_d),
|\alpha|=\alpha_1+\cdots+\alpha_d,
\partial^\alpha=\prod_{j=1}^d\partial_{x_j}^{\alpha_j}.
$$
The Sobolev norm $\|\cdot\|_{m,D}$ is given by
$$
\|v\|_{m,D}=\Big(\sum_{j=0}^m|v|_{j,D}^2\Big)^{\frac{1}{2}}.
$$

The space $H^0(D)$ coincides with $L^2(D)$, for which the norm and
the inner product are denoted by $\|\cdot\|_D$ and
$(\cdot,\cdot)_D$, respectively. When $D=\Omega$, we shall drop
the subscript $D$ in the norm and inner product notation.

Throughout the paper, the letter $C$ is used to denote a generic
constant independent of the mesh size and functions involved.

\section{Weak Partial Derivatives of Second
Order}\label{Section:Wpartial} For the biharmonic problem
(\ref{0.1}) with variational form (\ref{0.2}), the principle
differential operator is $\partial_{ij}^2$. Thus, we shall define
weak partial derivatives, denoted by $\partial_{ij,w}^2$, for a
class of discontinuous functions. For numerical purpose, we shall
also introduce a discrete version for the weak partial derivative
$\partial_{ij,w}^2$ in polynomial subspaces.

Let $T$ be any polygonal or polyhedral domain with boundary
$\partial T$. By a weak function on the region $T$, we mean a
function $v=\{v_0,v_b,\textbf{v}_g\}$ such that $v_0\in L^2(T)$,
$v_b\in L^{2}(\partial T)$ and $\textbf{v}_g\in [L^{2}(\partial
T)]^d$. The first and second components $v_0$ and $v_b$ can be
understood as the value of $v$ in the interior and on the boundary
of $T$. The third term, $\textbf{v}_g\in \mathbb{R}^d$ with
components $v_{gi}, i=1,\cdots,d,$ intends to represent the gradient
$\nabla v$ on the boundary of $T$. Note that $v_b$ and
$\textbf{v}_g$ may not necessarily be related to the trace of $v_0$
and $\nabla v_0$ on $\partial T$, respectively.

Denote by $W(T)$ the space of all weak functions on $T$; i.e.,
$$
W(T)=\{v=\{v_0,v_b,\textbf{v}_g\}: v_0\in L^2(T), v_b\in
L^{2}(\partial T), \textbf{v}_g\in [L^{2}(\partial T)]^d\}.
$$
Let $\langle\cdot,\cdot\rangle_{\partial T}$ be the inner product in
$L^2(\partial T)$. Define $G(T)$ by
$$
 G(T)=\{\varphi: \varphi\in H^2(T)\}.
$$

\begin{defi}\label{defition3.1} The dual of $L^2(T)$ can be identified with
itself by using the standard $L^2$ inner product as the action of
linear functionals. With a similar interpretation,  for any $v\in
W(T)$, the weak partial derivative $\partial^2_{ij}$  of
$v=\{v_0,v_b,\textbf{v}_g \}$ is defined as a linear functional
$\partial^2_{ij,w} v$ in the dual space of $G(T)$ whose action on
each $\varphi \in G(T)$ is given by
 \begin{equation}\label{2.3}
 (\partial^2_{ij,w}v,\varphi)_T=(v_0,\partial^2_{ji}\varphi)_T-
 \langle v_b n_i,\partial_j\varphi\rangle_{\partial T}+
 \langle v_{gi},\varphi n_j\rangle_{\partial T}.
 \end{equation}
Here $\textbf{n}$, with components $n_{i}\ (i=1,\cdots,d)$, is the
outward normal direction of $T$ on its boundary.
\end{defi}

Unlike the classical second order derivatives, $\partial_{ij,w}^2v$
is usually different from $\partial_{ji,w}^2v$ when $i\neq j$.

The Sobolev space $H^2(T)$ can be embedded into the space $W(T)$ by
an inclusion map $i_W: H^2(T)\rightarrow W(T)$ defined as follows
$$
i_W(\phi)=\{\phi|_T,\phi|_{\partial T},\nabla \phi|_{\partial
T}\}, \qquad \phi\in H^2(T).
$$
With the help of the inclusion map $i_W$, the Sobolev space
$H^2(T)$ can be viewed as a subspace of $W(T)$ by identifying each
$\phi\in H^2(T)$ with $i_W(\phi)$. Analogously, a weak function
$v=\{v_0,v_b,\textbf{v}_g\}\in W(T)$ is said to be in $H^2(T)$ if
it can be identified with a function $\phi\in H^2(T)$ through the
above inclusion map. It is not hard to see that
$\partial_{ij,w}^2$ is identical with  $\partial_{ij}^2$ in
$H^2(T)$; i.e., $\partial_{ij,w}^2v =\partial_{ij}^2v$ for all
functions $v\in H^2(T)$.

Next, for $i,j=1,\cdots,d$, we introduce a discrete version of
$\partial^2_{ij,w}$ by approximating $\partial^2_{ij,w}$ in a
polynomial subspace of the dual of $G(T)$. To this end, for any
non-negative integer $r\geq 0$, denote by $P_r(T)$ the set of
polynomials on $T$ with degree no more than $r$. A discrete
$\partial^2_{ij,w}\ (i,j=1,\cdots,d)$ operator, denoted by
$\partial^2_{ij,w,r,T}$, is defined as the unique polynomial
 $\partial^2_{ij,w,r,T} v\in P_r(T)$ satisfying the following equation
  \begin{equation}\label{2.4}
 (\partial^2_{ij,w,r,T}v,\varphi)_T=(v_0,\partial^2
 _{ji}\varphi)_T-\langle v_b n_i,\partial_j\varphi\rangle_{\partial T}
 +\langle v_{gi},\varphi n_j\rangle_{\partial T},\quad \forall \varphi \in
 P_r(T).
 \end{equation}

\section{ Numerical Algorithm by Weak Galerkin}\label{Section:WGFEM}

Let ${\cal T}_h$ be a partition of the domain $\Omega$ into polygons
in 2D or polyhedra in 3D. Assume that ${\cal T}_h$ is shape regular
in the sense  as defined in \cite{wy3655}. Denote by $\E_h$ the set
of all edges or flat faces in ${\cal T}_h$, and let
$\E_h^0=\E_h\setminus\partial\Omega$ be the set of all interior
edges or flat faces.

For any given integer $k\geq 2$, denote by $W_k(T)$ the discrete
weak function space given by
\begin{equation*}
W_k(T)=\big\{\{v_0,v_b,\textbf{v}_g\}: v_0\in P_k(T), v_b\in
P_{k-2}(e),\textbf{v}_g\in [P_{k-2}(e)]^d, e\subset \partial
T\big\}.
\end{equation*}
By patching $W_k(T)$ over all the elements $T\in {\cal T}_h$ through
a common value on the interface $\E_h^0$, we arrive at a weak finite
element space $V_h$ defined as follows
$$
V_h=\big\{\{v_0,v_b,\textbf{v}_g\}:\{v_0,v_b,\textbf{v}_g\}|_T\in
W_k(T), \forall T\in {\cal T}_h\big\}.
$$
Denote by $V_h^0$ the subspace of $V_h$ with vanishing trace; i.e.,
$$
V_h^0=\{\{v_0,v_b,\textbf{v}_g\}\in
V_h,v_b|_e=0,\textbf{v}_g|_e=\textbf{0}, e\subset \partial T\cap
\partial\Omega\}.
$$

Intuitively, the finite element functions in $V_h$ are piecewise
polynomials of degree $k\ge 2$. The extra value on the boundary of
each element is approximated by polynomials of degree $k-2$ for the
function itself and its gradient. For such functions, we may compute
the weak second order derivative $\partial^2_{ij,w} v$ by using the
formula (\ref{2.3}). For computational purpose, this weak partial
derivative $\partial^2_{ij,w} v$ has to be approximated by using
polynomials, preferably one with degree $k-2$. Denote by
$\partial^2_{ij, w,k-2}$ the discrete weak partial derivative
computed by using (\ref{2.4}) on each element $T$ for $k\geq 2$;
i.e.,
$$
(\partial^2_{ij, w,k-2} v)|_T=\partial^2_{ij,w,k-2,T}(v|_T), \qquad
v\in V_h.
$$
For simplicity of notation and without confusion, we shall drop the
subscript $k-2$ in the notation $\partial^2_{ij, w,k-2}$. We also
introduce the following notation
$$
(\partial^2_{w}u,\partial^2_{w}v)_h=\sum_{T\in{\cal
T}_h}\sum_{i,j=1}^d (\partial^2_{ij,w}u,\partial^2_{ij,w}v)_T,\quad
\forall u, v\in V_h.
$$

For each element $T$, denote by $Q_0$ the $L^2$ projection onto
$P_k(T)$, $k\geq 2$. For each edge or face $e\subset\partial T$,
denote by $Q_b$ the $L^2$ projection onto $P_{k-2}(e)$ or
$[P_{k-2}(e)]^d$, as appropriate. For any $w\in H^2(\Omega)$, we
define a projection $Q_h w$ into the weak finite element space $V_h$
such that on each element $T$,
$$
Q_hu=\{Q_0u,Q_bu,Q_b(\nabla u)\}.
$$

For any $w=\{w_0,w_b,\textbf{w}_g\}$ and
$v=\{v_0,v_b,\textbf{v}_g\}$ in $V_h$, we introduce a bilinear form
as follows
\begin{equation*}
\begin{split}
s(w,v)=&\sum_{T\in {\cal T}_h}  h_T^{-1}\langle Q_b(\nabla
w_0)-\textbf{w}_g, Q_b(\nabla v_0)-
\textbf{v}_g\rangle_{\partial T}\\
&+\sum_{T\in {\cal T}_h} h_T^{-3}\langle Q_b w_0-w_b, Q_b
v_0-v_b\rangle_{\partial T}.
\end{split}
\end{equation*}

The following is a precise statement of the WG finite element scheme
for the biharmonic equation (\ref{0.1}) based on the variational
formulation (\ref{0.2}).

\begin{algorithm} Find $u_h=\{u_0,u_b,\textbf{u}_g\}\in V_h$
satisfying $u_b=Q_b\xi$, $\textbf{u}_g\cdot \textbf{n}=Q_{b}\nu$,
$\textbf{u}_g\cdot \boldsymbol{\tau}=Q_{b}(\nabla\xi\cdot\btau)$ on
$\partial\Omega$ and the following equation:
\begin{equation}\label{2.7}
(\partial_{w}^2u_h,\partial^2_{w}v)_h+s(u_h,v)=(f,v_0), \quad
\forall v=\{v_0,v_b,\textbf{v}_g\}\in V_h^0,
\end{equation}
where $\boldsymbol{\tau}\in \mathbb{R}^d$ is the tangential
direction to the edges/faces on the boundary $\partial\Omega$.
\end{algorithm}

The following is a useful observation concerning the finite element
space $V_h^0$.

\begin{lemma}\label{Lemma4.1} For any $v\in V_h^0$, define $\3barv\3bar$ by
\begin{equation}\label{3barnorm}
\3barv\3bar^2= (\partial^2_{ w}v,\partial^2_{ w}v)_h+s(v,v).
\end{equation}
Then, $\3bar\cdot\3bar$ is a norm in the linear space $V_h^0$.
\end{lemma}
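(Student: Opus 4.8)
The plan is to verify the three defining properties of a norm, observing that positivity $\3barv\3bar\ge 0$, homogeneity $\3bar\alpha v\3bar=|\alpha|\,\3barv\3bar$, and the triangle inequality are all immediate consequences of the fact that the bilinear form $(\partial^2_{w}\cdot,\partial^2_{w}\cdot)_h+s(\cdot,\cdot)$ is symmetric and positive semidefinite, so that the Cauchy--Schwarz inequality applies. Thus the entire content of the lemma reduces to establishing positive definiteness: I must show that $\3barv\3bar=0$ forces $v=\{v_0,v_b,\textbf{v}_g\}=0$.

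Assume $\3barv\3bar=0$. Since both terms in (\ref{3barnorm}) are nonnegative, I obtain simultaneously $\partial^2_{ij,w}v=0$ on every $T$ for all $i,j$, and $s(v,v)=0$; the vanishing of the stabilizer yields, on each $T$, the two identities $Q_b(\nabla v_0)=\textbf{v}_g$ and $Q_b v_0=v_b$ on $\partial T$. The first key step is to upgrade $\partial^2_{ij,w}v=0$ to $\partial^2_{ij}v_0=0$ on each $T$. To do this I would set the right-hand side of (\ref{2.4}) to zero and integrate by parts twice on the interior term $(v_0,\partial^2_{ji}\varphi)_T$, which is legitimate since $v_0\in P_k(T)$ is smooth on $T$. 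Because $n_i$ is constant on each flat face and $\partial_j\varphi$, $\varphi n_j$ restrict to polynomials of degree $\le k-2$ on each $e\subset\partial T$, the stabilizer identities allow me to replace $v_b$ by $v_0$ and $v_{gi}$ by $\partial_i v_0$ in the boundary terms of (\ref{2.4}).

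After this substitution, the diagonal case $i=j$ produces $(\partial^2_{ii}v_0,\varphi)_T=0$ for all $\varphi\in P_{k-2}(T)$, the two pairs of boundary contributions cancelling identically, so the choice $\varphi=\partial^2_{ii}v_0\in P_{k-2}(T)$ forces $\partial^2_{ii}v_0=0$. For the off-diagonal case $i\neq j$ the boundary terms do not cancel within a single relation; the device I expect to need is to add the relation for the index pair $(i,j)$ to the one for $(j,i)$, whereupon the boundary terms cancel by antisymmetry, the interior terms combine via $\partial^2_{ij}v_0=\partial^2_{ji}v_0$, and again the choice $\varphi=\partial^2_{ij}v_0$ gives $\partial^2_{ij}v_0=0$. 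Hence $v_0$ is a polynomial of degree at most one on each $T$, so $\nabla v_0$ is constant on each element.

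It remains to glue the elementwise information together and invoke the boundary condition. Since $\nabla v_0$ is constant, $Q_b(\nabla v_0)=\nabla v_0$ on $\partial T$, so $\textbf{v}_g$ equals this constant gradient; as $\textbf{v}_g$ is single valued across each interior face, $\nabla v_0$ must agree on adjacent elements, and by connectedness of $\mathcal{T}_h$ it is globally constant, say $\nabla v_0\equiv\mathbf{b}$. Comparing $Q_b v_0=v_b$ across an interior face for the two adjacent elements, whose linear parts already coincide, forces their constant parts to agree as well, so $v_0$ is a single affine function on all of $\Omega$. Finally, $v\in V_h^0$ means $\textbf{v}_g=\mathbf{0}$ and $v_b=0$ on $\partial\Omega$; the former gives $\mathbf{b}=\mathbf{0}$, so $v_0$ is a global constant, and the latter together with $v_b=Q_b v_0$ forces that constant to vanish. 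Thus $v_0=0$, whence $v_b=Q_b v_0=0$ and $\textbf{v}_g=Q_b(\nabla v_0)=\mathbf{0}$ throughout, i.e. $v=0$. I expect the main obstacle to be the off-diagonal integration-by-parts bookkeeping, where the antisymmetrization between $(i,j)$ and $(j,i)$ is precisely what makes the residual boundary terms disappear.
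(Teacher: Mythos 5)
Your proposal is correct and is essentially the paper's own argument: reduce to positive definiteness, use $s(v,v)=0$ to replace the boundary unknowns $v_b$ and $v_{gi}$ by $Q_bv_0$ and $Q_b(\partial_i v_0)$, deduce $\partial^2_{ij}v_0=0$ on each element by testing against $\varphi=\partial^2_{ij}v_0\in P_{k-2}(T)$, and then glue the resulting elementwise affine functions using the single-valuedness of $v_b$ and $\textbf{v}_g$ together with the vanishing boundary data. The only (harmless) detour is your antisymmetrization device for $i\neq j$: if one integrates by parts first in $x_i$ and then in $x_j$, the boundary terms cancel identically for \emph{every} pair $(i,j)$ --- this is precisely the paper's Appendix identity (\ref{A.002}), which its proof of the lemma invokes directly instead of rederiving.
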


\begin{proof} We shall only verify the positivity property for
$\3bar\cdot\3bar$. To this end, assume that $\3barv\3bar=0$ for some
$v\in V_h^0$. It follows from (\ref{3barnorm}) that
$\partial^2_{ij,w}v=0$ on $T$, $Q_b(\nabla v_0)=\textbf{v}_g$ and
$Q_bv_0=v_b$ on $\partial T$. We claim that $\partial^2_{ij}v_0=0$
on each element $T$. To this end, for any $\varphi \in P_{k-2}(T)$,
we use $\partial^2_{ij,w}v=0$ and the identity (\ref{A.002}) to
obtain
\begin{equation*}
\begin{split}
0=&(\partial^2_{ij,w}v,\varphi)_T\\
 =&(\partial^2_{ij}v_0, \varphi)_T+
 \langle v_{gi}-Q_{b}(\partial_i v_0),\varphi\cdot n_j\rangle_{\partial T}
 +\langle Q_bv_0-v_b,\partial_j \varphi \cdot n_i\rangle_{\partial T}\\
 =&(\varphi,\partial^2_{ij}v_0)_T,
 \end{split}
\end{equation*}
which implies that $\partial^2_{ij}v_0=0$ for $i,j=1,\ldots, d$ on
each element $T$. Thus, $v_0$ is a linear function on $T$ and
$\nabla v_0$ is a constant on each element. The condition
$Q_b(\nabla v_0)=\textbf{v}_g$ on $\partial T$ implies that $\nabla
v_0=\textbf{v}_g$ on $\partial T$. Thus, $\nabla v_0$ is continuous
over the whole domain $\Omega$. The fact that $\textbf{v}_g =0$ on
$\partial\Omega$ leads to $\nabla v_0=0$ in $\Omega$ and
$\textbf{v}_g=0$ on each edge/face. Thus, $v_0$ is a constant on
each element $T$. This, together with the fact that $Q_bv_0=v_b$ on
$\partial T$, indicates that $v_0$ is continuous over the whole
domain $\Omega$. It follows from $v_b=0$ on $\partial\Omega$ that
$v_0=0$ everywhere in the domain $\Omega$. Furthermore,
$v_b=Q_b(v_0)=0$ on each edge/face. This completes the proof of the
lemma.
\end{proof}

\begin{lemma}\label{Lemma4.2} The Weak Galerkin Algorithm (\ref{2.7}) has a
unique solution.
\end{lemma}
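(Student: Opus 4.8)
The plan is to reduce the claim to the positive-definiteness already encoded in Lemma~\ref{Lemma4.1}, exploiting the fact that \eqref{2.7} is a finite-dimensional problem. First I would note that the admissible solutions $u_h\in V_h$ are precisely those satisfying the prescribed boundary data $u_b=Q_b\xi$, $\textbf{u}_g\cdot\textbf{n}=Q_b\nu$, and $\textbf{u}_g\cdot\btau=Q_b(\nabla\xi\cdot\btau)$ on $\partial\Omega$; these form an affine subspace of $V_h$ whose underlying linear space is $V_h^0$. Fixing one particular function meeting the boundary conditions and subtracting it off converts \eqref{2.7} into a square linear system posed on the finite-dimensional space $V_h^0$. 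For such a system existence and uniqueness are equivalent, so it suffices to prove uniqueness.

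To establish uniqueness I would assume that $u_h^{(1)}$ and $u_h^{(2)}$ both solve \eqref{2.7} and set $w=u_h^{(1)}-u_h^{(2)}$. Since the two solutions share the same boundary data, the data cancels and $w\in V_h^0$; moreover, subtracting the two instances of \eqref{2.7} and using bilinearity gives the homogeneous identity $(\partial^2_w w,\partial^2_w v)_h+s(w,v)=0$ for every $v\in V_h^0$. Choosing the test function $v=w$ then yields $(\partial^2_w w,\partial^2_w w)_h+s(w,w)=0$, which by the definition \eqref{3barnorm} is exactly $\3barw\3bar^2=0$. Because $\3bar\cdot\3bar$ is a norm on $V_h^0$ by Lemma~\ref{Lemma4.1}, this forces $w=0$, i.e.\ $u_h^{(1)}=u_h^{(2)}$.

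I expect no serious obstacle here: this is the standard energy argument for a symmetric, positive semidefinite bilinear form on a finite-dimensional space, with the norm property of Lemma~\ref{Lemma4.1} supplying the coercivity needed to upgrade semidefiniteness to definiteness. The only step demanding care is the bookkeeping of the boundary conditions --- verifying that the difference of two admissible solutions carries homogeneous boundary data so that $w$ genuinely lies in $V_h^0$ and Lemma~\ref{Lemma4.1} is applicable. Once uniqueness is in hand, the finite-dimensional equivalence recorded in the first step delivers existence as well, completing the proof.
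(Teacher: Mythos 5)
Your proof is correct and follows essentially the same route as the paper: the difference of two solutions lies in $V_h^0$, satisfies the homogeneous equation, and testing with itself gives $\3bar\cdot\3bar^2=0$, whence Lemma~\ref{Lemma4.1} forces the difference to vanish. The only difference is that you explicitly record the reduction to a square finite-dimensional linear system (so that uniqueness implies existence), a step the paper leaves implicit; this is a welcome bit of completeness, not a divergence in method.
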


\begin{proof} Let $u_h^{(1)}$ and $u_h^{(2)}$ be two different
solutions of  the Weak Galerkin Algorithm (\ref{2.7}). It is clear
that the difference  $e_h=u_h^{(1)}-u_h^{(2)}$ is a finite element
function in $V_h^0$ satisfying
\begin{equation}\label{2.10}
 (\partial^2_{ w}e_h,\partial^2_{ w}v)_h+s(e_h,v)=0, \quad \forall v \in V_h^0.
\end{equation}
By setting  $v=e_h$ in  (\ref{2.10}), we obtain
$$
 (\partial^2_{w}e_h,\partial^2_{w}e_h)_h+s(e_h,e_h)=0.
$$
From Lemma 4.1, we get $e_h\equiv 0$, i.e.,
$u_h^{(1)}=u_h^{(2)}$.
\end{proof}

The rest of the paper will provide a mathematical and computational
justification for the WG finite element method (\ref{2.7}).

\section{$L^2$ Projections and Their
Properties}\label{Section:L2projection}
The goal of this section is to establish some technical results for
the $L^2$ projections. These results are valuable in the error
analysis for the WG finite element method.

\begin{lemma}\label{Lemma5.1} On each element $T\in {\cal T}_h$, let ${\cal
Q}_h$ be the local $L^2$ projection onto $P_{k-2}(T)$. Then, the
$L^2$ projections $Q_h$ and ${\cal Q}_h$ satisfy the following
commutative property:
\begin{equation}\label{l}
\partial^2_{ij,w}(Q_h w)={\cal Q}_h(\partial^2_{ij} w),\qquad \forall
i,j=1,\ldots,d,
\end{equation}
 for all $w\in H^2(T)$.
\end{lemma}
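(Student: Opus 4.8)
The plan is to exploit the defining relation (\ref{2.4}) for the discrete weak derivative together with the defining orthogonality of the $L^2$ projections $Q_0$, $Q_b$, and ${\cal Q}_h$. Both sides of (\ref{l}) are polynomials in $P_{k-2}(T)$, so it suffices to show they induce the same action against every test polynomial $\varphi\in P_{k-2}(T)$ and then invoke the uniqueness of the polynomial in $P_{k-2}(T)$ representing a given functional.

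First I would write out the right-hand side of (\ref{2.4}) for $v=Q_hw=\{Q_0w,Q_bw,Q_b(\nabla w)\}$, obtaining
$$(\partial^2_{ij,w}(Q_hw),\varphi)_T=(Q_0w,\partial^2_{ji}\varphi)_T-\langle Q_bw\, n_i,\partial_j\varphi\rangle_{\partial T}+\langle Q_b(\partial_i w),\varphi n_j\rangle_{\partial T}.$$
The next step is to strip off the three projections by a degree count. Since $\varphi\in P_{k-2}(T)$ we have $\partial^2_{ji}\varphi\in P_k(T)$, so the definition of $Q_0$ gives $(Q_0w,\partial^2_{ji}\varphi)_T=(w,\partial^2_{ji}\varphi)_T$. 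On each flat face $e\subset\partial T$ the outward normal component $n_i$ is constant, hence $n_i\,\partial_j\varphi|_e$ and $\varphi n_j|_e$ both lie in $P_{k-2}(e)$; the definition of $Q_b$ then removes the projections from the two boundary terms, yielding $\langle Q_bw\, n_i,\partial_j\varphi\rangle_{\partial T}=\langle w\, n_i,\partial_j\varphi\rangle_{\partial T}$ and $\langle Q_b(\partial_i w),\varphi n_j\rangle_{\partial T}=\langle \partial_i w,\varphi n_j\rangle_{\partial T}$.

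After these substitutions the right-hand side is exactly the expression in (\ref{2.3}) for the weak derivative of $w$ regarded as an element of $W(T)$ through the inclusion map $i_W$, namely $v_0=w$, $v_b=w|_{\partial T}$, $v_{gi}=\partial_i w|_{\partial T}$. Because $w\in H^2(T)$, this weak derivative coincides with the classical one, so the right-hand side equals $(\partial^2_{ij}w,\varphi)_T$. Finally, since $\varphi\in P_{k-2}(T)$ and ${\cal Q}_h$ is the $L^2$ projection onto $P_{k-2}(T)$, we have $(\partial^2_{ij}w,\varphi)_T=({\cal Q}_h(\partial^2_{ij}w),\varphi)_T$. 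Thus $(\partial^2_{ij,w}(Q_hw),\varphi)_T=({\cal Q}_h(\partial^2_{ij}w),\varphi)_T$ for all $\varphi\in P_{k-2}(T)$, and identity (\ref{l}) follows.

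The argument is essentially a commuting-diagram computation, so no single step is deep; the one place demanding care is the degree bookkeeping that licenses dropping each projection, in particular the observation that $n_i$ is constant on each flat face, which is what keeps the boundary test functions inside $P_{k-2}(e)$. I would also verify the lowest-order case $k=2$, where $\partial^2_{ji}\varphi$ and $\partial_j\varphi$ vanish and only the last boundary term survives, to confirm the identity reduces correctly to the divergence-theorem relation $\int_{\partial T}\partial_i w\, n_j=\int_T\partial^2_{ij}w$.
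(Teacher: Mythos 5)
Your proposal is correct and follows essentially the same route as the paper's proof: write out the defining relation for $\partial^2_{ij,w}(Q_hw)$ against $\varphi\in P_{k-2}(T)$, strip the projections $Q_0$ and $Q_b$ by orthogonality, identify the result with $(\partial^2_{ij}w,\varphi)_T$ (the paper does this by explicit integration by parts, you by citing the consistency of $\partial^2_{ij,w}$ with $\partial^2_{ij}$ on $H^2(T)$, which is the same computation), and finish with the definition of ${\cal Q}_h$. Your added degree bookkeeping, in particular that $n_i$ is constant on each flat face so the boundary test functions stay in $P_{k-2}(e)$, makes explicit a point the paper leaves tacit.
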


\begin{proof} For $\varphi\in P_{k-2}(T)$ and $w\in H^2(T)$, from the definition of $\partial^2_{ij,w}$
and the usual integration by parts, we have
\begin{equation*}
\begin{split}
(\partial^2_{ij,w}(Q_h w),\varphi)_T&=(Q_0
w,\partial^2_{ji}\varphi)_T -\langle Q_b w,\partial_j \varphi\cdot
n_i\rangle_{\partial T}+
\langle Q_{b}(\partial_i w)\cdot n_j,\varphi\rangle_{\partial T}\\
&=(w,\partial^2_{ji}\varphi)_T-\langle w,\partial_j
 \varphi\cdot n_i\rangle_{\partial T}+
 \langle \partial_i w\cdot n_j,\varphi\rangle_{\partial T}\\
&=(\partial^2_{ij}w,\varphi)_T\\
&=({\cal Q}_h\partial^2_{ij}w,\varphi)_T, \quad \forall
i,j=1,\cdots,d,
\end{split}
\end{equation*}
which completes the proof.
\end{proof}

The commutative property (\ref{l}) indicates that the discrete weak
partial derivative of the $L^2$ projection of a smooth function is a
good approximation of the classical partial derivative of the same
function. This is a nice and useful property of the discrete weak
partial differential operator $\partial^2_{ij,w}$ in application to
algorithm design and analysis.

The following lemma provides some approximation properties for the
projection operators $Q_h$ and ${\cal Q}_h$.

\begin{lemma}\label{Lemma5.2}\cite{wy3655, mwy0927}  Let ${\cal T}_h$ be a
finite element partition of  $\Omega$ satisfying the shape
regularity assumption as defined in \cite{wy3655}. Then, for any
$0\leq s\leq 2$ and $1\leq m\leq k$, we have
\begin{equation}\label{3.2}
\sum_{T\in {\cal T}_h}h_T^{2s}\|u-Q_0u\|^2_{s,T}\leq Ch^{2(m+1)}\|u\|_{m+1}^2,
\end{equation}
\begin{equation}\label{3.3}
\sum_{T\in {\cal
T}_h}\sum_{i,j=1}^dh_T^{2s}\|\partial^2_{ij}u-{\cal
Q}_h\partial^2_{ij}u\|^2_{s,T}\leq Ch^{2(m-1)}\|u\|_{m+1}^2.
\end{equation}
\end{lemma}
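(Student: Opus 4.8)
The statement is a pair of standard $L^2$-projection approximation estimates, and since the lemma is cited to \cite{wy3655, mwy0927}, the proof I would give is a clean reduction to the classical approximation theory for $L^2$ projections on shape-regular meshes together with a local trace/inverse argument. The plan is to treat the two inequalities (\ref{3.2}) and (\ref{3.3}) separately but by the same template: on each element $T$, bound the projection error by the best polynomial approximation error via a Bramble--Hilbert (Deny--Lions) argument, then sum over $T\in{\cal T}_h$ using shape regularity to control the mesh-dependent weights $h_T^{2s}$ uniformly by powers of the global mesh size $h$.

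First I would establish (\ref{3.2}). Since $Q_0$ is the $L^2$ projection onto $P_k(T)$ and $u\in H^{m+1}(T)$ with $1\le m\le k$, the classical estimate on a shape-regular element gives, for $0\le s\le 2$,
\begin{equation*}
\|u-Q_0 u\|_{s,T}\le C\, h_T^{(m+1)-s}\,|u|_{m+1,T},
\end{equation*}
which is obtained by a scaling argument to a reference element, applying the Bramble--Hilbert lemma there (using $m+1\le k+1$ so that $P_k$ contains the Taylor polynomial of degree $m$), and scaling back; the shape-regularity assumption of \cite{wy3655} guarantees the scaling constants are uniform even for general polygons/polyhedra. Multiplying by $h_T^{2s}$ removes the $s$-dependence of the $h_T$-exponent, yielding $h_T^{2s}\|u-Q_0u\|_{s,T}^2\le C\,h_T^{2(m+1)}|u|_{m+1,T}^2$. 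Summing over $T$ and bounding $h_T\le h$ gives the claim, after absorbing $\sum_T |u|_{m+1,T}^2=|u|_{m+1}^2\le\|u\|_{m+1}^2$.

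For (\ref{3.3}) I would note that ${\cal Q}_h$ is the $L^2$ projection onto $P_{k-2}(T)$, and the quantity being approximated is $\partial_{ij}^2 u$, which lies in $H^{m-1}(T)$ when $u\in H^{m+1}(T)$. Applying the same reference-element Bramble--Hilbert estimate to ${\cal Q}_h$ acting on $\partial_{ij}^2 u$ gives
\begin{equation*}
\|\partial_{ij}^2 u-{\cal Q}_h\partial_{ij}^2 u\|_{s,T}\le C\,h_T^{(m-1)-s}\,|\partial_{ij}^2 u|_{m-1,T},
\end{equation*}
valid since $m-1\le k-1$ ensures $P_{k-2}$ reproduces polynomials of degree $m-2$. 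Again multiplying by $h_T^{2s}$, summing over $i,j$ and over $T$, and observing that $\sum_{i,j}|\partial_{ij}^2 u|_{m-1,T}^2\le C\,|u|_{m+1,T}^2$ produces the bound $C\,h^{2(m-1)}\|u\|_{m+1}^2$.

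The only point requiring genuine care — and the step I expect to be the main obstacle — is the validity of the local Bramble--Hilbert / trace estimates on \emph{general} polygonal or polyhedral elements rather than simplices, including the case $s>0$ where the $H^s$-seminorm of the projection error must be controlled. On such elements one cannot simply invoke a single fixed reference element; instead one relies on the shape-regularity framework of \cite{wy3655}, which typically decomposes each element into a uniformly bounded number of shape-regular simplices (or assumes a star-shaped condition with comparable in/out radii) so that a Bramble--Hilbert argument and the accompanying inverse and trace inequalities hold with constants depending only on the regularity parameters. Since the lemma is quoted directly from \cite{wy3655, mwy0927}, I would state these element-level estimates as consequences of that shape-regularity assumption and refer to those works for the polytopal details, then carry out only the summation and the $h_T\le h$ reduction explicitly.
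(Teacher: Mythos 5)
Your proposal is correct: the paper gives no proof of Lemma \ref{Lemma5.2} at all, citing it directly from \cite{wy3655, mwy0927}, and your Bramble--Hilbert/scaling argument---with the genuinely delicate point (uniform constants on general polytopal elements, including the $H^s$-seminorm control for $s>0$) correctly identified and deferred to the shape-regularity framework of those references---is precisely the standard argument those works use. Nothing further is needed.
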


Using Lemma \ref{Lemma5.2} we can prove the following result.

\begin{lemma}\label{Lemma5.3} Let $1\leq m\leq k$ and $u\in
H^{\max\{m+1,4\}}(\Omega)$. There exists a constant $C$ such that
the following estimates hold true:
\begin{equation}\label{3.5}
\Big(\sum_{T\in {\cal
T}_h}\sum_{i,j=1}^dh_T\|\partial^2_{ij}u-{\cal
Q}_h\partial^2_{ij}u\|_{\partial T}^2\Big)^{\frac{1}{2}}\leq
Ch^{m-1}\|u\|_{m+1},
\end{equation}
\begin{equation}\label{3.6}
\Big(\sum_{T\in {\cal
T}_h}\sum_{i,j=1}^dh_T^3\|\partial_j(\partial^2
_{ij}u-{\cal Q}_h\partial^2_{ij}u)\|_{\partial T}^2\Big)^{\frac{1}{2}}\\
\leq Ch^{m-1}(\|u\|_{m+1}+h\delta_{m,2}\|u\|_4),
\end{equation}
\begin{equation}\label{3.7}
\Big(\sum_{T\in {\cal T}_h}h_T^{-1}\|Q_b(\nabla Q_0u)-Q_b(\nabla
u)\|_{\partial T}^2\Big)^{\frac{1}{2}}\leq Ch^{m-1}\|u\|_{m+1},
\end{equation}
\begin{equation}\label{3.8}
\Big(\sum_{T\in {\cal T}_h}h_T^{-3}\|Q_b
 (Q_0u)- Q_bu\|_{\partial T}^2\Big)^{\frac{1}{2}}\leq
 Ch^{m-1}\|u\|_{m+1}.
\end{equation}
Here $\delta_{i,j}$ is the usual Kronecker's delta with value $1$
when $i=j$ and value $0$ otherwise.
\end{lemma}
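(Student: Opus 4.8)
The plan is to reduce every one of the four boundary quantities to element-interior quantities by means of the standard trace inequality
\[
\|\varphi\|_{\partial T}^2 \le C\left(h_T^{-1}\|\varphi\|_{0,T}^2 + h_T\|\nabla\varphi\|_{0,T}^2\right),\qquad \varphi\in H^1(T),
\]
which holds on shape-regular elements (cf. \cite{wy3655}), and then to invoke the interior approximation estimates (\ref{3.2})--(\ref{3.3}) of Lemma \ref{Lemma5.2} together with the contraction property $\|Q_b g\|_{\partial T}\le\|g\|_{\partial T}$ of the boundary $L^2$ projection. Throughout, the hypothesis $u\in H^{\max\{m+1,4\}}(\Omega)$ guarantees $u\in H^4(\Omega)$, so $\partial^2_{ij}u\in H^2(T)$ and all the traces that appear are well defined.

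For (\ref{3.5}) I would set $\varphi=\partial^2_{ij}u-\mathcal{Q}_h\partial^2_{ij}u$, apply the trace inequality, multiply by $h_T$, and sum over $T$ and $i,j$, obtaining a bound by $\sum_{T,i,j}\big(\|\varphi\|_{0,T}^2+h_T^2\|\varphi\|_{1,T}^2\big)$. The two sums are controlled by (\ref{3.3}) with $s=0$ and $s=1$ respectively, each contributing $Ch^{2(m-1)}\|u\|_{m+1}^2$; taking square roots yields (\ref{3.5}).

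Estimates (\ref{3.7}) and (\ref{3.8}) are the routine ones. Using the contraction of $Q_b$, I would discard the outer projection, reducing the left-hand sides to $\|\nabla(Q_0u-u)\|_{\partial T}$ and $\|Q_0u-u\|_{\partial T}$, respectively. Applying the trace inequality (to $\nabla(Q_0u-u)$ for (\ref{3.7}) and to $Q_0u-u$ for (\ref{3.8})), multiplying by the prescribed weights $h_T^{-1}$ and $h_T^{-3}$, and inserting the standard element-wise approximation bound $\|u-Q_0u\|_{s,T}\le Ch_T^{m+1-s}\|u\|_{m+1,T}$ (consistent with (\ref{3.2})) for $s=1,2$ reduces everything to $\sum_T h_T^{2(m-1)}\|u\|_{m+1,T}^2$, which gives the claimed $Ch^{m-1}\|u\|_{m+1}$.

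The main obstacle is (\ref{3.6}). Taking $\varphi=\partial_j(\partial^2_{ij}u-\mathcal{Q}_h\partial^2_{ij}u)$, the trace inequality together with the weight $h_T^3$ produces the two sums $\sum_{T,i,j}h_T^2\|\partial^2_{ij}u-\mathcal{Q}_h\partial^2_{ij}u\|_{1,T}^2$ and $\sum_{T,i,j}h_T^4\|\partial^2_{ij}u-\mathcal{Q}_h\partial^2_{ij}u\|_{2,T}^2$. The first is bounded by (\ref{3.3}) with $s=1$. The second is delicate: since $\partial^2_{ij}u\in H^{m-1}(T)$, the $s=2$ instance of (\ref{3.3}) is available with the rate $h^{m-1}$ only when $m-1\ge 2$, i.e. $m\ge 3$, in which case it directly yields $Ch^{2(m-1)}\|u\|_{m+1}^2$ and no correction term appears. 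When $m=2$ the $H^3$-regularity of $u$ is not enough to reach this rate; instead I would use the extra regularity $u\in H^4$ furnished by the hypothesis, bounding $\|\partial^2_{ij}u-\mathcal{Q}_h\partial^2_{ij}u\|_{2,T}\le C\|\partial^2_{ij}u\|_{2,T}$ via the $H^2$-stability of the $L^2$ projection on shape-regular elements, so that the explicit factor $h_T^4$ gives $\sum_{T,i,j}h_T^4\|\partial^2_{ij}u-\mathcal{Q}_h\partial^2_{ij}u\|_{2,T}^2\le Ch^4\|u\|_4^2$. This is precisely the origin of the $h\delta_{m,2}\|u\|_4$ correction. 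Adding the two sums and taking square roots gives (\ref{3.6}); I expect the step requiring the most care to be the uniform $H^2$-stability estimate $\|\mathcal{Q}_hg\|_{2,T}\le C\|g\|_{2,T}$ underlying the $m=2$ case.
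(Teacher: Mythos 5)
Your proposal is correct and follows essentially the same route as the paper's own proof: apply the trace inequality (\ref{trace-inequality}) to reduce each boundary term to element-interior quantities, use the $L^2$-boundedness of $Q_b$ to discard the outer projections in (\ref{3.7}) and (\ref{3.8}), and invoke the interior estimates (\ref{3.2})--(\ref{3.3}) of Lemma \ref{Lemma5.2}. You are in fact more explicit than the paper on the one delicate point, namely that the $s=2$ instance of (\ref{3.3}) is only meaningful for $m\ge 3$ and that the case $m=2$ must instead be handled through the $H^4$-regularity of $u$ (via stability of ${\cal Q}_h$ on shape-regular elements), which is precisely the origin of the $h\delta_{m,2}\|u\|_4$ correction in (\ref{3.6}); the paper absorbs this silently into its citation of (\ref{3.3}).
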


\begin{proof} To prove (\ref{3.5}), by the trace inequality (\ref{trace-inequality}) and the
estimate (\ref{3.3}), we get
\begin{equation*}
\begin{split}
&\sum_{T\in {\cal T}_h}\sum_{i,j=1}^dh_T\|\partial^2
_{ij}u-{\cal Q}_h\partial^2_{ij}u\|_{\partial T}^2\\
\leq & C\sum_{T\in {\cal T}_h}\sum_{i,j=1}^d\Big(\|\partial^2
_{ij}u-{\cal Q}_h\partial^2
_{ij}u\|_{T}^2+h_T^2|\partial^2_{ij}u-{\cal Q}_h\partial^2_{ij}u|_{1,T}^2\Big)\\
\leq & Ch^{2m-2}\|u\|^2_{m+1}.
\end{split}
\end{equation*}

As to (\ref{3.6}), by the trace inequality (\ref{trace-inequality})
and the estimate (\ref{3.3}), we obtain
\begin{equation*}
\begin{split}
&\sum_{T\in {\cal T}_h}\sum_{i,j=1}^dh_T^3\|\partial_j(\partial_{ij}u-{\cal Q}_h\partial_{ij}u)\|_{\partial T}^2\\
\leq & C\sum_{T\in {\cal
T}_h}\sum_{i,j=1}^d\Big(h_T^2\|\partial_j(\partial^2_{ij}u-{\cal
Q}_h\partial^2_{ij}u)\|_{T}^2
+h_T^4|\partial_{j}(\partial^2_{ij}u-{\cal Q}_h\partial^2_{ij}u)|_{1,T}^2\Big)\\
\leq & Ch^{2m-2}\big(\|u\|^2_{m+1}+h^2\delta_{m,2}\|u\|_4^2\big).
\end{split}
\end{equation*}

As to (\ref{3.7}), by  the trace inequality (\ref{trace-inequality})
and the estimate (\ref{3.2}), we have

\begin{equation*}
\begin{split}
&\sum_{T\in {\cal T}_h}h_T^{-1}\|Q_b(\nabla Q_0u)-Q_b(\nabla u)\|_{\partial T}^2\\
\leq& \sum_{T\in {\cal T}_h}h_T^{-1}\|\nabla Q_0u-\nabla u\|_{\partial T}^2\\
\leq& C\sum_{T\in {\cal T}_h}\Big(h_T^{-2}\|\nabla Q_0u-\nabla u\|_{ T}^2+|\nabla Q_0u-\nabla u|_{1,T}^2\Big)\\
\leq&  Ch^{2m-2}\|u\|^2_{m+1}.
\end{split}
\end{equation*}

Finally for (\ref{3.8}), by the trace inequality
(\ref{trace-inequality}) and the estimate (\ref{3.2}), we have
\begin{equation*}
\begin{split}
&\sum_{T\in {\cal T}_h}h_T^{-3}\|Q_b (Q_0u)- Q_bu\|_{\partial T}^2\\
\leq & \sum_{T\in {\cal T}_h}h_T^{-3}\|Q_0u-u\|_{\partial T}^2\\
\leq& C\sum_{T\in {\cal T}_h}\Big(h_T^{-4}\|Q_0u-u\|_{T}^2+h_T^{-2}\|\nabla(Q_0u-u)\|_{T}^2\Big)\\
\leq&  Ch^{2m-2}\|u\|^2_{m+1}.
\end{split}
\end{equation*}
This completes the proof of the lemma.
\end{proof}

\section{An Error Equation}\label{Section:error-equation}
Let $u$ and $u_h=\{u_0,u_b,\textbf{u}_g\} \in V_h$ be the solution
(\ref{0.1}) and (\ref{2.7}) respectively. Denote by
\begin{equation}\label{error-term}
e_h=Q_hu-u_h
\end{equation}
the error function between the $L^2$ projection of the exact
solution $u$ and its weak Galerkin finite element approximation
$u_h$. An error equation refers to some identity that the error
function $e_h$ must satisfy. The goal of this section is to derive
an error equation for $e_h$. The following is our main result.

\begin{lemma}\label{Lemma6.1} The error function
$e_h$ as defined by (\ref{error-term}) is a finite element function
in $V_h^0$ and satisfies the following equation
\begin{equation}\label{4.1}
(\partial^2_w e_h,\partial^2_w v)_h+s(e_h,v)=\phi_u(v),\qquad
\forall v\in V_h^0,
\end{equation}
where
\begin{equation}\label{phiu}
\begin{split}
\phi_u(v)=
&\sum_{T\in{\cal T}_h}\sum_{i,j=1}^d\langle\partial^2_{ij}u-{\cal Q}_h(\partial^2_{ij}u),(\partial_iv_0-v_{gi})\cdot n_j\rangle_{\partial T}\\
&-\sum_{T\in{\cal T}_h}\sum_{i,j=1}^d\langle\partial_j(\partial^2_{ij}u-{\cal Q}_h\partial^2_{ij}u)\cdot n_i,v_0-v_b\rangle_{\partial T}\\
&+s(Q_hu,v).
\end{split}
\end{equation}
\end{lemma}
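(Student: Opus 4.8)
The plan is to establish the error equation by testing the variational formulation against an arbitrary $v\in V_h^0$ and carefully tracking how the exact solution $u$ fits into the discrete framework. First I would verify that $e_h=Q_hu-u_h$ lies in $V_h^0$: since $u_h$ satisfies the boundary conditions $u_b=Q_b\xi$, $\textbf{u}_g\cdot\textbf{n}=Q_b\nu$, and the tangential condition on $\partial\Omega$, and since $Q_hu=\{Q_0u,Q_bu,Q_b(\nabla u)\}$ reproduces exactly these traces from the boundary data $u|_{\partial\Omega}=\xi$ and $\nabla u|_{\partial\Omega}$, the boundary components of $e_h$ vanish, placing $e_h$ in $V_h^0$. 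This part is routine bookkeeping.

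The heart of the argument is to compute $(\partial^2_w(Q_hu),\partial^2_w v)_h$ and relate it to the data $(f,v_0)$. My plan is to start from the commutative property of Lemma \ref{Lemma5.1}, which gives $\partial^2_{ij,w}(Q_hu)={\cal Q}_h(\partial^2_{ij}u)$, so that
\begin{equation*}
(\partial^2_w(Q_hu),\partial^2_w v)_h=\sum_{T\in{\cal T}_h}\sum_{i,j=1}^d({\cal Q}_h(\partial^2_{ij}u),\partial^2_{ij,w}v)_T.
\end{equation*}
Since ${\cal Q}_h(\partial^2_{ij}u)\in P_{k-2}(T)$, I may invoke the definition (\ref{2.4}) of the discrete weak derivative with the test polynomial $\varphi={\cal Q}_h(\partial^2_{ij}u)$ to rewrite each term using $v_0$, $v_b$, and $\textbf{v}_g$. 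The key move is then to add and subtract the exact $\partial^2_{ij}u$ inside these boundary and interior pairings and integrate by parts twice on each element, converting the interior volume term $(v_0,\partial^2_{ji}\varphi)_T$ back into $(\partial^2_{ij}v_0,\varphi)$-type expressions plus boundary terms. Summing over all elements and using that the exact solution satisfies $\sum_{i,j}\partial^2_{ij}(\partial^2_{ij}u)=\Delta^2 u=f$ in the interior should produce the $(f,v_0)$ term, while the differences $\partial^2_{ij}u-{\cal Q}_h(\partial^2_{ij}u)$ survive only on the element boundaries, yielding exactly the two boundary sums in (\ref{phiu}).

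The main obstacle will be the careful two-fold integration by parts and the bookkeeping of the resulting boundary terms, together with correctly exploiting cancellation of interior-edge contributions. Because $v\in V_h^0$ is single-valued across interior faces (through the patching that defines $V_h$) and vanishes on $\partial\Omega$, the jump contributions of the continuous quantity $\partial^2_{ij}u$ across $\E_h^0$ must telescope away, leaving only the ${\cal Q}_h$-projection error on $\partial T$; I would need to arrange the algebra so that the normal components $n_i,n_j$ and the tangential regrouping match the precise form $(\partial_i v_0-v_{gi})\cdot n_j$ and $\partial_j(\cdots)\cdot n_i$ appearing in $\phi_u(v)$. Finally, subtracting the WG scheme (\ref{2.7}) from the identity for $Q_hu$ and recognizing that the stabilizer term $s(Q_hu,v)$ does not cancel (since the scheme only contains $s(u_h,v)$) leaves the residual $+s(Q_hu,v)$ in $\phi_u(v)$, completing the derivation of (\ref{4.1}).
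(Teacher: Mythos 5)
Your proposal is correct and follows essentially the same route as the paper: the commutativity $\partial^2_{ij,w}(Q_hu)={\cal Q}_h(\partial^2_{ij}u)$ from Lemma \ref{Lemma5.1}, testing the definition (\ref{2.4}) with $\varphi={\cal Q}_h(\partial^2_{ij}u)$, element-wise integration by parts together with the projection orthogonality and $\Delta^2u=f$, cancellation of the single-valued $v_b$, $v_{gi}$ terms across interior faces, and finally subtraction of the scheme (\ref{2.7}) leaving the residual $s(Q_hu,v)$. The only cosmetic difference is that the paper packages your inline integration by parts as the prepared identity (\ref{A.001}) of the Appendix.
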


\begin{proof} From Lemma \ref{Lemma5.1} we have $\partial_{ij,w}^2 Q_h
u = \Q_h (\partial_{ij}^2 u)$. Now using (\ref{A.001}) with
$\varphi=\partial_{ij,w}^2 Q_h u $ we obtain
\begin{equation*}
\begin{split}
(\partial^2_{ij}v,\partial^2_{ij,w}Q_hu)_T=&(\partial^2_{ij}v_0,{\cal
Q}_h(\partial^2_{ij}u))_T+\langle v_0-v_b,\partial_j({\cal
Q}_h(\partial^2_{ij}u))
\cdot n_i\rangle_{\partial T}\\
&-\langle(\partial_i v_0-v_{gi})\cdot n_j,{\cal Q}_h\partial^2_{ij} u\rangle_{\partial T}\\
=&(\partial^2_{ij}v_0, \partial^2_{ij}u)_T+\langle
v_0-v_b,\partial_j({\cal Q}_h(\partial^2_{ij}u))\cdot
n_i\rangle_{\partial T}\\
&-\langle(\partial_i v_0-v_{gi})\cdot n_j,{\cal Q}_h\partial^2_{ij}
u\rangle_{\partial T},
\end{split}
\end{equation*}
which implies that
\begin{equation}\label{4.2}
\begin{split}
(\partial^2_{ij}v_0, \partial^2_{ij}u)_T=&
(\partial^2_{ij,w}Q_hu,\partial^2_{ij,w}v)_T-\langle v_0-v_b,
\partial_j({\cal Q}_h(\partial^2_{ij}u))\cdot n_i\rangle_{\partial T}\\
&+\langle(\partial_i v_0-v_{gi})\cdot n_j,{\cal Q}_h\partial^2_{ij}
u\rangle_{\partial T}.
\end{split}
\end{equation}
We emphasize that (\ref{4.2}) is valid for any $v\in V_h^0$ and any
smooth function $u\in H^r(\Omega),\ r>3$. Next, it follows from the
integration by parts that
\begin{equation*}
(\partial^2_{ij}u, \partial^2_{ij}v_0)_T=
((\partial^2_{ij})^2u,v_0)_T+\langle \partial^2_{ij}u,
\partial_i v_0\cdot n_j\rangle_{\partial T}
-\langle \partial_j(\partial^2_{ij}u)\cdot n_i, v_0\rangle_{\partial
T}.
\end{equation*}
By summing over all $T$ and  using the identity that $(\triangle^2
u,v_0)=(f,v_0)$, we obtain
\begin{equation*}
\begin{split}
\sum_{T\in {\cal T}_h}\sum_{i,j=1}^d(\partial^2_{ij}u, \partial^2_{ij}v_0)_T=& (f,v_0)+\sum_{T\in {\cal T}_h}\sum_{i,j=1}^d\langle \partial^2_{ij}u,(\partial_i v_0-v_{gi})\cdot n_j\rangle_{\partial T}\\
&-\sum_{T\in {\cal T}_h}\sum_{i,j=1}^d\langle
\partial_j(\partial^2_{ij}u)\cdot n_i, v_0-v_b\rangle_{\partial
T},
\end{split}
\end{equation*}
where we have used the fact that the sum for the terms associated
with $v_{gi}\cdot n_j$ and $v_b n_i$ vanishes (note that both
$v_{gi}$ and $v_b$ vanishes on $\partial\Omega$). Combining the
above equation with (\ref{4.2}) yields
\begin{equation*}
\begin{split}
 (\partial^2_w Q_hu,\partial^2_w v)_h=& (f,v_0)+\sum_{T\in{\cal T}_h}\sum_{i,j=1}^d\langle\partial^2_{ij}u-{\cal Q}_h(\partial^2_{ij}u),(\partial_iv_0-v_{gi})\cdot n_j\rangle_{\partial T}\\
&-\sum_{T\in{\cal
T}_h}\sum_{i,j=1}^d\langle\partial_j(\partial^2_{ij}u-{\cal
Q}_h\partial^2_{ij}u)\cdot n_i,v_0-v_b\rangle_{\partial T}.
\end{split}
\end{equation*}
Adding $s(Q_hu,v)$ to both side of the above equation gives
\begin{equation}\label{4.3}
\begin{split}
 &(\partial^2_w Q_hu,\partial^2_w v)_h+s(Q_hu,v)\\
 =& (f,v_0)+\sum_{T\in{\cal T}_h}\sum_{i,j=1}^d\langle\partial^2_{ij}u-{\cal Q}_h(\partial^2_{ij}u),
 (\partial_iv_0-v_{gi})\cdot n_j\rangle_{\partial T}\\
&-\sum_{T\in{\cal
T}_h}\sum_{i,j=1}^d\langle\partial_j(\partial^2_{ij}u-{\cal
Q}_h\partial^2_{ij}u)\cdot n_i,v_0-v_b\rangle_{\partial
T}+s(Q_hu,v).
\end{split}
\end{equation}
Subtracting (\ref{2.7}) from (\ref{4.3}) yields the following error
equation
\begin{equation*}
\begin{split}
&(\partial^2_w e_h,\partial^2_w v)_h+s(e_h,v)=\sum_{T\in{\cal
T}_h}\sum_{i,j=1}^d\langle\partial^2_{ij}u-
{\cal Q}_h(\partial^2_{ij}u),(\partial_iv_0-v_{gi})\cdot n_j\rangle_{\partial T}\\
&-\sum_{T\in{\cal
T}_h}\sum_{i,j=1}^d\langle\partial_j(\partial^2_{ij}u-{\cal
Q}_h\partial^2_{ij}u)\cdot n_i,v_0-v_b\rangle_{\partial
T}+s(Q_hu,v),
\end{split}
\end{equation*}
which completes the proof.
\end{proof}

\section{Error Estimates in $H^2$}\label{Section:H2error}
The goal of this section is to derive some error estimate for the
solution of Weak Galerkin Algorithm (\ref{2.7}). From the error
equation (\ref{4.1}), it suffices to handle the term $\phi_u(v)$
defined by (\ref{phiu}).

Let $w$ be any smooth function in $\Omega$. We rewrite $\phi_w(v)$
as follows:
\begin{equation}\label{phiu-breaks}
\begin{split}
\phi_w(v)=&\sum_{T\in{\cal T}_h}\sum_{i,j=1}^d\langle\partial^2_{ij}w-{\cal Q}_h(\partial^2_{ij}w),(\partial_iv_0-v_{gi})\cdot n_j\rangle_{\partial T}\\
&-\sum_{T\in{\cal T}_h}\sum_{i,j=1}^d\langle\partial_j(\partial^2_{ij}w-{\cal Q}_h\partial^2_{ij}w)\cdot n_i, v_0-v_b\rangle_{\partial T}\\
&+ \sum_{T\in{\cal T}_h}  h_T^{-1}\langle Q_b(\nabla Q_0w)-Q_b(\nabla w), Q_b(\nabla v_0)-\textbf{v}_g \rangle_{\partial T}\\
&+\sum_{T\in{\cal T}_h} h_T^{-3}\langle Q_bQ_0w-Q_bw,
Q_bv_0-v_b\rangle_{\partial T}\\
=&I_1(w,v)+I_2(w,v)+I_3(w,v)+I_4(w,v),
\end{split}
\end{equation}
where $I_j(w,v)$ are defined accordingly. Each $I_j(w,v)$ is to be
handled as follows.

\begin{lemma}\label{lemma:IoneItwo}
Assume that $w\in H^{r+1}(\Omega), v\in V_h^0$ with $r\in [2,k]$.
Let $I_1(w,v)$ and $I_2(w,v)$ be given in (\ref{phiu-breaks}). Then,
we have
\begin{eqnarray}
|I_1(w,v)|&\le& Ch^{r-1}\|w\|_{r+1}\3bar v\3bar,\label{mmm1}\\
|I_2(w, v)|&\le& Ch^{r-1}(\|w\|_{r+1}+\delta_{k,2}\|w\|_4) \3bar
v\3bar.\label{mmm2}
\end{eqnarray}
\end{lemma}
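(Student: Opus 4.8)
The plan is to bound each term $I_1(w,v)$ and $I_2(w,v)$ separately by the same basic strategy: write each as a sum over elements of boundary inner products, apply the Cauchy--Schwarz inequality on $\partial T$ to split each summand into a product of two $L^2(\partial T)$ factors, distribute the mesh-dependent weights $h_T$ symmetrically between the two factors, and then apply Cauchy--Schwarz again over the sum on $T$. The ``data'' factors (those involving $\partial^2_{ij}w - \mathcal{Q}_h \partial^2_{ij}w$ and its tangential derivative) will be controlled by the approximation estimates (\ref{3.5}) and (\ref{3.6}) from Lemma \ref{Lemma5.3}, while the ``test-function'' factors must be absorbed into the triple-bar norm $\3bar v\3bar$.

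For $I_1(w,v)$, the data factor is $\partial^2_{ij}w - \mathcal{Q}_h \partial^2_{ij}w$ on $\partial T$, which I would weight by $h_T^{1/2}$ so that estimate (\ref{3.5}) yields $C h^{r-1}\|w\|_{r+1}$. The matching test factor is then $(\partial_i v_0 - v_{gi})\cdot n_j$ weighted by $h_T^{-1/2}$, giving a term $\sum_T h_T^{-1}\|\nabla v_0 - \mathbf{v}_g\|_{\partial T}^2$. The subtle point here is that the stabilizer $s(v,v)$ controls $\sum_T h_T^{-1}\|Q_b(\nabla v_0) - \mathbf{v}_g\|^2_{\partial T}$, not $\nabla v_0 - \mathbf{v}_g$ directly; so I would insert $Q_b$ using the fact that $\mathbf{v}_g \in [P_{k-2}(e)]^d$ is already a polynomial fixed under $Q_b$, and split $\nabla v_0 - \mathbf{v}_g = (\nabla v_0 - Q_b \nabla v_0) + (Q_b \nabla v_0 - \mathbf{v}_g)$. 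The first piece is an $L^2$-projection error on the boundary that must be bounded by $\3bar v\3bar$ via an inverse/trace inequality relating boundary projection error to the volume quantity $\sum_T \sum_{ij}\|\partial^2_{ij}v_0\|_T^2 \le (\partial^2_w v, \partial^2_w v)_h$ (using the commutative identity or identity (\ref{A.002}) again); the second piece is exactly $s(v,v)^{1/2}$.

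For $I_2(w,v)$, the data factor is the tangential derivative $\partial_j(\partial^2_{ij}w - \mathcal{Q}_h\partial^2_{ij}w)\cdot n_i$, which I would weight by $h_T^{3/2}$ so estimate (\ref{3.6}) delivers $Ch^{r-1}(\|w\|_{r+1} + h\,\delta_{r,2}\|w\|_4)$; note the $\delta_{k,2}$ in (\ref{mmm2}) should correspond to $\delta_{r,2}$ under the hypothesis $r\in[2,k]$, and since $h\le C$ the extra factor of $h$ is harmless. The matching test factor $v_0 - v_b$ gets weight $h_T^{-3/2}$, producing $\sum_T h_T^{-3}\|v_0 - v_b\|^2_{\partial T}$, which must again be related to $\3bar v\3bar$ by the same $Q_b$-splitting trick: writing $v_0 - v_b = (v_0 - Q_b v_0) + (Q_b v_0 - v_b)$, the second piece is directly the second term of $s(v,v)$, and the first is a boundary projection error bounded via a trace/inverse estimate by the volume $L^2$-norm of $v_0$, itself controlled through a Poincaré-type inequality by $\3bar v\3bar$ on $V_h^0$.

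The main obstacle I anticipate is not the approximation estimates, which plug in mechanically, but the passage from the raw quantities $\nabla v_0 - \mathbf{v}_g$ and $v_0 - v_b$ on $\partial T$ to the projected quantities appearing in $s(v,v)$, and the justification that the residual projection-error pieces are genuinely dominated by $(\partial^2_w v,\partial^2_w v)_h$. This requires a trace inequality combined with an inverse inequality on each element, plus the observation (essentially the content of the positivity argument in Lemma \ref{Lemma4.1}) that the volume second-derivative energy, the gradient-jump stabilizer, and the value-jump stabilizer together dominate all the relevant boundary norms of $v_0$ and its gradient. I would state these as standard shape-regularity consequences and cite the trace inequality (\ref{trace-inequality}) rather than re-deriving them, so that the final bounds assemble cleanly into (\ref{mmm1}) and (\ref{mmm2}).
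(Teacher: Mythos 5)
Your treatment of $I_1$, and of $I_2$ when $k\ge 3$, is essentially the paper's own argument: Cauchy--Schwarz with the weights $h_T^{\pm 1/2}$ (resp.\ $h_T^{\pm 3/2}$), the approximation estimates (\ref{3.5})--(\ref{3.6}), and absorption of the test factors into $\3bar v\3bar$ by inserting $Q_b$ -- this is exactly what the paper packages as Lemma \ref{Lemma6.5} and Lemma \ref{Lemma6.2}. Two corrections on the absorption step, though. First, the volume bound you invoke, $\sum_T\sum_{i,j}\|\partial^2_{ij}v_0\|_T^2\le(\partial^2_w v,\partial^2_w v)_h$, is false as stated: identity (\ref{A.002}) with $\varphi=\partial^2_{ij}v_0$ leaves boundary terms that must be absorbed, via inverse inequalities, into the \emph{stabilizer}, so the correct statement is $\sum_T|v_0|_{2,T}^2\le C\3bar v\3bar^2$ (estimate (\ref{ap1})); harmless here, since $\3bar v\3bar$ is the target anyway. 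Second, your claim that $h_T^{-3}\|v_0-Q_bv_0\|^2_{\partial T}$ is controlled by the volume $L^2$-norm of $v_0$ plus a Poincar\'e inequality cannot work on scaling grounds: an edge projection error gains powers of $h$ only through derivatives, so to beat the weight $h_T^{-3/2}$ one needs $\|v_0-Q_bv_0\|_{\partial T}\le Ch_T^2|v_0|_{2,\partial T}$ and then $|v_0|_{2,T}$ via (\ref{ap1}); no bound through $\|v_0\|_T$ alone can cancel $h_T^{-3}$.

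The genuine gap is $I_2$ in the lowest-order case $k=2$, which your uniform treatment does not cover and which is precisely the origin of the term $\delta_{k,2}\|w\|_4$ in (\ref{mmm2}) -- a term you misread as $\delta_{r,2}$ inherited from (\ref{3.6}). When $k=2$, the edge projection $Q_b$ maps onto $P_0(e)$, so the splitting $v_0-v_b=(v_0-Q_bv_0)+(Q_bv_0-v_b)$ only yields $\|v_0-Q_bv_0\|_{\partial T}\le Ch_T|v_0|_{1,\partial T}$; against the weight $h_T^{-3/2}$ this leaves $h_T^{-1/2}|v_0|_{1,\partial T}\sim h_T^{-1}\|\nabla v_0\|_T$, which is \emph{not} dominated by $\3bar v\3bar$ uniformly in $h$. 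This is why the paper states Lemma \ref{Lemma6.2} only for $k\ge 3$ and proves a separate Lemma \ref{Lemma6.4} for $k=2$: there ${\cal Q}_h\partial^2_{ij}w$ is piecewise constant, so $\partial_j(\partial^2_{ij}w-{\cal Q}_h\partial^2_{ij}w)=\partial_j\partial^2_{ij}w$, and after splitting $v_0-v_b$ one uses the orthogonality of $Q_b$ to replace the data factor by $(I-Q_b)\partial_j\partial^2_{ij}w$ against $v_0-Q_bv_0$, transferring the approximation burden from the test function to $w$ -- this is what costs the extra regularity $\|w\|_4$ -- and then bounds $\|v_0-Q_bv_0\|_{\partial T}\le Ch_T^{1/2}|v_0|_{1,T}$ together with the Poincar\'e inequality of Lemma \ref{Lemma6.3}. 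Without this separate mechanism your proof fails exactly for the piecewise-quadratic element, the case the paper implements numerically.
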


\begin{proof}
For the term $I_1(w,v)$, we use Cauchy-Schwarz inequality, the
estimate (\ref{3.5}) with $m=r$ and Lemma \ref{Lemma6.5} to obtain
\begin{equation}\label{4.5}
\begin{split}
|I_1(w,v)|=&\Big|\sum_{T\in{\cal T}_h}\sum_{i,j=1}^d\langle\partial^2_{ij}w-{\cal Q}_h(\partial^2_{ij}w),(\partial_i v_0-v_{gi})\cdot n_j\rangle_{\partial T}\Big|\\
\leq &\Big(\sum_{T\in{\cal T}_h}\sum_{i,j=1}^d h_T\|\partial^2_{ij}w-{\cal Q}_h(\partial^2_{ij}w) \|_{\partial T}^2\Big)^{\frac{1}{2}}\cdot\\
&\Big(\sum_{T\in{\cal T}_h}\sum_{i,j=1}^d h_T^{-1}\|(\partial_i v_0-v_{gi})\cdot n_j\|_{\partial T}^2\Big)^{\frac{1}{2}}\\
\leq &Ch^{r-1}\|w\|_{r+1}\3bar v\3bar,
\end{split}
\end{equation}
which verifies (\ref{mmm1}).

As to the term $I_2(w,v)$, for the case of quadratic element $k=2$,
we use Lemma \ref{Lemma6.4} to obtain
\begin{equation}\label{before4.6}
\begin{split}
\left|\sum_{T\in {\cal
T}_h}\sum_{i,j=1}^d\langle\partial_j(\partial^2_{ij} w-{\cal
Q}_h\partial^2_{ij} w)\cdot n_i, v_0-v_b\rangle_{\partial
T}\right|\leq Ch \|w\|_4\3bar v\3bar.
\end{split}
\end{equation}
For $k\ge 3$, we use Cauchy-Schwarz inequality, the estimate
(\ref{3.6}) with $m=r$, and Lemma \ref{Lemma6.2} to obtain
\begin{equation}\label{4.6}
\begin{split}
&\Big|\sum_{T\in{\cal T}_h}\sum_{i,j=1}^d\langle\partial_j(\partial^2_{ij}w-{\cal Q}_h\partial^2_{ij}w)\cdot n_i,v_0-v_b\rangle_{\partial T}\Big|\\
\leq &\Big(\sum_{T\in{\cal T}_h}\sum_{i,j=1}^d
h_T^3\|\partial_j(\partial^2_{ij}w- {\cal Q}_h\partial^2_{ij}w)
\|_{\partial T}^2\Big)^{\frac{1}{2}}\cdot
\Big(\sum_{T\in{\cal T}_h}h_T^{-3}\|v_0-v_b\|_{\partial T}^2\Big)^{\frac{1}{2}}\\
\leq &Ch^{r-1} \ \|w\|_{r+1}\ \3bar v\3bar.
\end{split}
\end{equation}
Combining (\ref{before4.6}) with (\ref{4.6}) yields
\begin{equation}\label{before4.7}
|I_2(w,v)|\leq  Ch^{r-1}(\|w\|_{r+1}+\delta_{k,2}\|w\|_4)\ \3bar
v\3bar.
\end{equation}
This completes the proof of the lemma.
\end{proof}

\smallskip

\begin{lemma}\label{lemma:IthreeIfour}
Assume that $w\in H^{r+1}(\Omega), v\in V_h^0$ with $r\in [2,k]$.
Let $I_3(w,v)$ and $I_4(w,v)$ be given in (\ref{phiu-breaks}). Then,
we have
\begin{eqnarray}
|I_3(w,v)|+|I_4(w, v)|\le Ch^{r-1}\|w\|_{r+1} |v|_h,\label{mmm3}
\end{eqnarray}
where
\begin{equation}\label{3bar-half}
|v|_h = s(v,v)^{\frac12}.
\end{equation}
\end{lemma}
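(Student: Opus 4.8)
The plan is to bound $I_3(w,v)$ and $I_4(w,v)$ separately by Cauchy--Schwarz, recognizing that each is a sum of boundary pairings whose second factor is exactly one of the two terms appearing in the stabilizer $s(\cdot,\cdot)$. The key observation is that the weight structure has been chosen so that the projection-error factor and the stabilizer factor share reciprocal powers of $h_T$; this is what makes the estimate clean.

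For $I_3(w,v)$, I would first apply Cauchy--Schwarz on each element by splitting the weight $h_T^{-1}$ as $h_T^{-1/2}\cdot h_T^{-1/2}$, giving
\begin{equation*}
|I_3(w,v)|\le\Big(\sum_{T\in{\cal T}_h}h_T^{-1}\|Q_b(\nabla Q_0w)-Q_b(\nabla w)\|_{\partial T}^2\Big)^{\frac12}\Big(\sum_{T\in{\cal T}_h}h_T^{-1}\|Q_b(\nabla v_0)-\textbf{v}_g\|_{\partial T}^2\Big)^{\frac12}.
\end{equation*}
The first factor is controlled by the approximation estimate (\ref{3.7}) with $m=r$, yielding a bound of $Ch^{r-1}\|w\|_{r+1}$. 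The second factor is precisely the square root of the first sum in the definition of $s(v,v)$, hence bounded by $|v|_h=s(v,v)^{1/2}$. Multiplying gives $|I_3(w,v)|\le Ch^{r-1}\|w\|_{r+1}|v|_h$.

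The treatment of $I_4(w,v)$ is entirely parallel. I would split the weight $h_T^{-3}$ as $h_T^{-3/2}\cdot h_T^{-3/2}$ and apply Cauchy--Schwarz to obtain a product of two sums. The first sum, $\sum_T h_T^{-3}\|Q_bQ_0w-Q_bw\|_{\partial T}^2$, is bounded by estimate (\ref{3.8}) with $m=r$, again giving $Ch^{r-1}\|w\|_{r+1}$. The second sum, $\sum_T h_T^{-3}\|Q_bv_0-v_b\|_{\partial T}^2$, is the second term in $s(v,v)$ and is therefore bounded by $|v|_h$. Adding the two estimates completes the proof.

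I do not anticipate a serious obstacle here: the statement is essentially a matching of weights followed by invocation of the two already-established approximation bounds (\ref{3.7}) and (\ref{3.8}). The only point requiring minor care is confirming that the $L^2$ projections $Q_b$ on the stabilizer-side factors are consistent with how the approximation lemmas were stated, so that no stray projection prevents the direct identification of the second factor with a summand of $s(v,v)$; since $Q_b$ is an orthogonal projection this causes no difficulty.
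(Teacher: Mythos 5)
Your proof is correct and follows essentially the same route as the paper: split the weights $h_T^{-1}$ and $h_T^{-3}$ symmetrically, apply Cauchy--Schwarz, bound the $w$-factors by (\ref{3.7}) and (\ref{3.8}) with $m=r$, and identify the $v$-factors as the two summands of $s(v,v)$. The only cosmetic difference is that the paper first removes the projections $Q_b$ from the $w$-factors (using that $Q_b(\nabla v_0)-\mathbf{v}_g$ and $Q_bv_0-v_b$ already lie in the range of $Q_b$) before invoking the estimates, whereas you keep them and cite (\ref{3.7})--(\ref{3.8}) verbatim; both are valid.
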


\begin{proof}
To estimate the term $I_3(w,v)$, we use Cauchy-Schwarz inequality
and the estimate (\ref{3.7}) with $m=r$ to obtain
\begin{equation}\label{4.7}
\begin{split}
|I_3(w,v)|=&\left|\sum_{T\in{\cal T}_h}h_T^{-1}\langle \nabla
Q_0w-\nabla
w,Q_b(\nabla v_0)-\textbf{v}_g \rangle_{\partial T}\right|\\
 \leq& \Big(\sum_{T\in{\cal T}_h}h_T^{-1}\|\nabla
Q_0w-\nabla w\|^2_{\partial T}\Big)^{\frac{1}{2}}\cdot
\Big(\sum_{T\in{\cal T}_h}h_T^{-1}\|Q_b(\nabla v_0)-\textbf{v}_g\|^2_{\partial T}\Big)^{\frac{1}{2}}\\
\leq& Ch^{r-1}\|w\|_{r+1} \ |v|_h.
\end{split}
\end{equation}

As to the term $I_4(w,v)$, we use Cauchy-Schwarz inequality and the
estimate (\ref{3.8}) with $m=r$ to obtain
\begin{equation}\label{4.8}
\begin{split}
|I_4(w,v)|=&\left|\sum_{T\in{\cal T}_h}h_T^{-3}\langle Q_0w-w, Q_b v_0-v_b\rangle_{\partial T}\right|\\
\leq& \Big(\sum_{T\in{\cal T}_h}h_T^{-3}\| Q_0w-w\|^2_{\partial T}\Big)^{\frac{1}{2}}\Big(\sum_{T\in{\cal T}_h}h_T^{-3}\|Q_b v_0-v_b\|^2_{\partial T}\Big)^{\frac{1}{2}}\\
\leq& Ch^{r-1}\|w\|_{r+1}\ |v|_h.
\end{split}
\end{equation}
This completes the proof.
\end{proof}

The following result is an estimate for the error function $e_h$ in
the trip-bar norm which is essentially an $H^2$-equivalent norm in
$V_h^0$.

\begin{theorem}\label{Theorem6.6} Let $u_h\in V_h$ be the weak Galerkin finite
element solution arising from (\ref{2.7}) with finite elements of
order $k\geq 2$. Assume that the exact solution $u$ of (\ref{0.1})
is sufficiently regular such that $u\in H^{\max\{k+1,4\}}(\Omega)$.
Then, there exists a constant $C$ such that
\begin{equation}\label{4}
\3baru_h-Q_hu\3bar\leq
Ch^{k-1}\Big(\|u\|_{k+1}+\delta_{k,2}\|u\|_4\Big).
\end{equation}
In other words, we have an optimal order of convergence in the $H^2$
norm.
\end{theorem}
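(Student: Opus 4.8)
The plan is to bound $\3baru_h-Q_hu\3bar = \3bar e_h\3bar$ by exploiting the error equation from Lemma \ref{Lemma6.1} together with the energy-norm interpretation of $\3bar\cdot\3bar$. First I would invoke Lemma \ref{Lemma4.1}, which guarantees that $\3bar\cdot\3bar$ is a genuine norm on $V_h^0$, and recall from (\ref{error-term}) and Lemma \ref{Lemma6.1} that $e_h=Q_hu-u_h\in V_h^0$ satisfies
\begin{equation*}
(\partial^2_w e_h,\partial^2_w v)_h+s(e_h,v)=\phi_u(v),\qquad \forall v\in V_h^0.
\end{equation*}
The left-hand side is precisely the bilinear form whose diagonal gives $\3bar\cdot\3bar^2$, so it is symmetric and positive definite on $V_h^0$.

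**The energy argument.**

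The key step is to test the error equation with $v=e_h$. This yields
\begin{equation*}
\3bar e_h\3bar^2=(\partial^2_w e_h,\partial^2_w e_h)_h+s(e_h,e_h)=\phi_u(e_h),
\end{equation*}
using the definition (\ref{3barnorm}) of the norm. It then suffices to bound $|\phi_u(e_h)|$ by $Ch^{k-1}(\|u\|_{k+1}+\delta_{k,2}\|u\|_4)\,\3bar e_h\3bar$ and divide through by $\3bar e_h\3bar$. Here I would apply the decomposition $\phi_u(v)=I_1+I_2+I_3+I_4$ from (\ref{phiu-breaks}) with $w=u$ and $v=e_h$, choosing the regularity index $r=k$ so that the hypothesis $u\in H^{\max\{k+1,4\}}(\Omega)$ supplies exactly the Sobolev norms needed. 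Lemmas \ref{lemma:IoneItwo} and \ref{lemma:IthreeIfour} then deliver
\begin{equation*}
|I_1|+|I_2|\le Ch^{k-1}(\|u\|_{k+1}+\delta_{k,2}\|u\|_4)\,\3bar e_h\3bar,
\qquad
|I_3|+|I_4|\le Ch^{k-1}\|u\|_{k+1}\,|e_h|_h.
\end{equation*}
Since $|e_h|_h=s(e_h,e_h)^{1/2}\le\3bar e_h\3bar$ by (\ref{3bar-half}) and (\ref{3barnorm}), all four terms are controlled by $\3bar e_h\3bar$, and summing gives $|\phi_u(e_h)|\le Ch^{k-1}(\|u\|_{k+1}+\delta_{k,2}\|u\|_4)\,\3bar e_h\3bar$. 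Cancelling one factor of $\3bar e_h\3bar$ (trivial if $e_h=0$) yields the claimed bound (\ref{4}).

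**Where the difficulty lies.**

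Most of the real analytic work has already been absorbed into the preceding lemmas, so the theorem itself is essentially an assembly. The main subtlety I would watch is the bookkeeping of the $\delta_{k,2}$ term: in Lemma \ref{lemma:IoneItwo} the estimate for $I_2$ splits into the quadratic case $k=2$ (invoking Lemma \ref{Lemma6.4} and producing the $\|u\|_4$ contribution) and the case $k\ge 3$ (producing only $\|u\|_{k+1}$), and I must make sure the regularity requirement $u\in H^4$ is genuinely available precisely when $k=2$—which is exactly what $\max\{k+1,4\}$ encodes. A second, minor point is confirming that taking $r=k$ is legitimate, i.e.\ that $r\in[2,k]$ holds and that no sharper choice is needed to reach optimal order $h^{k-1}$; since the exponent $r-1$ is increasing in $r$, the largest admissible $r=k$ is the correct one. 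Beyond that, the proof is a direct coercivity-plus-continuity estimate and requires no further machinery.
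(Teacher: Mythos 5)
Your proposal is correct and follows essentially the same route as the paper's own proof: test the error equation of Lemma \ref{Lemma6.1} with $v=e_h$, decompose $\phi_u(e_h)$ into the four terms $I_1,\dots,I_4$ of (\ref{phiu-breaks}), bound them via Lemmas \ref{lemma:IoneItwo} and \ref{lemma:IthreeIfour} with $r=k$, and cancel one factor of $\3bar e_h\3bar$. Your extra remarks (that $|e_h|_h\le\3bar e_h\3bar$ and that the cancellation is trivial when $e_h=0$) are small details the paper leaves implicit, not a departure from its argument.
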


\begin{proof} By letting  $v=e_h$ in the error equation (\ref{4.1}),
we obtain the following identity
\begin{equation}\label{4.4}
\begin{split}
\3bar e_h\3bar^2=&\phi_u(e_h)\\
=&I_1(u,e_h)+I_2(u,e_h)+I_3(u,e_h)+I_4(u,e_h),
\end{split}
\end{equation}
Using the estimates (\ref{mmm1}), (\ref{mmm2}), and (\ref{mmm3})
with $w=u$ and $v=e_h$ we arrive at
$$
\3bar e_h\3bar^2 \leq
Ch^{k-1}\Big(\|u\|_{k+1}+\delta_{k,2}\|u\|_4\Big)\3bar e_h\3bar,
$$
which implies the desired error estimate (\ref{4}).
\end{proof}

\section{Error Estimates in $L^2$}\label{Section:L2error}
This section shall establish an estimate for the first component of
the error function $e_h$ in the standard $L^2$ norm. To this end, we
consider the following dual problem:
\begin{equation}\label{5.1}
\begin{split}
\Delta^2\w&=e_0 \qquad \text{in}\ \Omega,\\
\w&=0 \qquad \text{on}\ \partial\Omega,\\
\frac{\partial \w}{\partial\textbf{n}}&=0  \qquad \text{on}\
\partial\Omega.
\end{split}
\end{equation}
Assume the above dual problem has the following regularity estimate
\begin{equation}\label{5.2}
\|\w\|_4\leq C\|e_0\|.
\end{equation}

\begin{theorem}\label{Theorem7.3} Let $u_h\in V_h$ be the solution of
the Weak Galerkin Algorithm (\ref{2.7}) with finite elements of
order $k\geq 2$. Let $t_0=\min\{k,3\}$. Assume that the exact
solution of (\ref{0.1}) is sufficiently regular so that $u\in
H^{4}(\Omega)$ for $k=2$ and $u\in H^{k+1}(\Omega)$ otherwise, and
the dual problem (\ref{5.1}) has the $H^4$ regularity. Then, there
exists a constant $C$ such that
\begin{equation}\label{5.3}
\|Q_0u-u_0\|\leq
Ch^{k+t_0-2}\Big(\|u\|_{k+1}+\delta_{k,2}\|u\|_{4}\Big).
\end{equation}
In other words, we have a sub-optimal order of convergence for $k=2$
and optimal order of convergence for $k\geq 3$.
\end{theorem}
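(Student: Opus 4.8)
The plan is to run an Aubin--Nitsche duality argument based on the dual problem (\ref{5.1}) with data $e_0=Q_0u-u_0$. The first step is to record the analogue of (\ref{4.3}) for the dual solution $\w$: since $\Delta^2\w=e_0$ and both $\w$ and $\nabla\w$ vanish on $\partial\Omega$, the derivation of Lemma \ref{Lemma6.1} applies verbatim and gives, for every $v\in V_h^0$,
$$
(\partial^2_w Q_h\w,\partial^2_w v)_h+s(Q_h\w,v)=(e_0,v_0)+\phi_\w(v),
$$
with $\phi_\w$ the functional (\ref{phiu}) in which $u$ is replaced by $\w$. Note that $Q_h\w\in V_h^0$ precisely because $\w$ and $\nabla\w$ vanish on $\partial\Omega$. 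Testing this identity with $v=e_h$ and using that the first component of $e_h$ is $e_0$ yields $a(Q_h\w,e_h)=\|e_0\|^2+\phi_\w(e_h)$, where $a(\cdot,\cdot)=(\partial^2_w\cdot,\partial^2_w\cdot)_h+s(\cdot,\cdot)$, while testing the primal error equation (\ref{4.1}) with $v=Q_h\w$ gives $a(e_h,Q_h\w)=\phi_u(Q_h\w)$. Since $a$ is symmetric, subtracting produces the representation
$$
\|e_0\|^2=\phi_u(Q_h\w)-\phi_\w(e_h).
$$
The whole proof reduces to bounding these two functionals by $Ch^{k+t_0-2}(\cdots)\|e_0\|$.

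The term $\phi_\w(e_h)$ is the routine one. Splitting it as $I_1(\w,e_h)+\cdots+I_4(\w,e_h)$ via (\ref{phiu-breaks}) and applying Lemma \ref{lemma:IoneItwo} and Lemma \ref{lemma:IthreeIfour} with $w=\w$ and $r=t_0=\min\{k,3\}$ (the largest admissible index, since $\w\in H^4$) gives $|\phi_\w(e_h)|\le Ch^{t_0-1}(\|\w\|_{t_0+1}+\delta_{k,2}\|\w\|_4)\3bar e_h\3bar$. Bounding $\3bar e_h\3bar$ by Theorem \ref{Theorem6.6} and using the regularity (\ref{5.2}) to write $\|\w\|_{t_0+1}\le\|\w\|_4\le C\|e_0\|$ yields $|\phi_\w(e_h)|\le Ch^{k+t_0-2}(\|u\|_{k+1}+\delta_{k,2}\|u\|_4)\|e_0\|$, which is the target order.

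The term $\phi_u(Q_h\w)$ is the crux, and its treatment is the step I expect to be the main obstacle. The stabilizer contributions $I_3(u,Q_h\w)$ and $I_4(u,Q_h\w)$ are immediate: Lemma \ref{lemma:IthreeIfour} bounds them by $Ch^{k-1}\|u\|_{k+1}|Q_h\w|_h$, and since $|Q_h\w|_h^2=s(Q_h\w,Q_h\w)$ is precisely the quantity controlled by (\ref{3.7})--(\ref{3.8}), one has $|Q_h\w|_h\le Ch^{t_0-1}\|\w\|_{t_0+1}$, giving the desired $h^{k+t_0-2}$. The delicate terms are $I_1(u,Q_h\w)$ and $I_2(u,Q_h\w)$, because their $\w$-dependent factors $\partial_iQ_0\w-Q_b(\partial_i\w)$ and $Q_0\w-Q_b\w$ are not of stabilizer type, and a direct Cauchy--Schwarz estimate would be limited by the degree $k-2$ of $Q_b$ and lose one order for $k=3$. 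I would split each such factor into a smooth-approximation part and a boundary-projection part, for instance $\partial_iQ_0\w-Q_b(\partial_i\w)=\partial_i(Q_0\w-\w)+(I-Q_b)(\partial_i\w)$. The first part is handled by Cauchy--Schwarz together with (\ref{3.5}) and the trace estimate behind (\ref{3.7}), producing a clean $h^{k+t_0-2}$. For the second part I would invoke orthogonality: the trace of $\Q_h(\partial^2_{ij}u)$ lies in $P_{k-2}$ on each edge or face, so it pairs to zero against $(I-Q_b)(\partial_i\w)$, reducing that contribution to $\sum_{T}\sum_{ij}\langle\partial^2_{ij}u,(I-Q_b)(\partial_i\w)\,n_j\rangle_{\partial T}$. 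This last sum vanishes identically: since $u$ is smooth, $\partial^2_{ij}u$ is single-valued across interior edges/faces, and $(I-Q_b)(\partial_i\w)$ is likewise single-valued there, so upon summing the two elements sharing an interior edge/face the outward normals $n_j$ cancel; and on $\partial\Omega$ the factor $(I-Q_b)(\partial_i\w)$ vanishes because $\nabla\w=0$ there. An identical cancellation, using that $\partial_j\Q_h(\partial^2_{ij}u)$ has trace in $P_{k-2}$ and that $\w=0$ on $\partial\Omega$, disposes of the boundary-projection part of $I_2(u,Q_h\w)$, so only the smooth-approximation parts of $I_1,I_2$ survive.

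Collecting the four estimates gives $|\phi_u(Q_h\w)|\le Ch^{k+t_0-2}(\|u\|_{k+1}+\delta_{k,2}\|u\|_4)\|e_0\|$, and combining with the bound on $\phi_\w(e_h)$ produces $\|e_0\|^2\le Ch^{k+t_0-2}(\|u\|_{k+1}+\delta_{k,2}\|u\|_4)\|e_0\|$, from which (\ref{5.3}) follows after dividing by $\|e_0\|$. The sub-optimality for $k=2$ is transparent in this scheme: then $t_0=\min\{k,3\}=2$ forces the exponent $k+t_0-2=k$ instead of $k+1$, reflecting the limited polynomial degree $k-2$ available on element boundaries for the lowest-order element.
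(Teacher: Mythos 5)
Your proposal is correct and takes essentially the same route as the paper: your symmetry-based derivation of $\|e_0\|^2=\phi_u(Q_h\w)-\phi_\w(e_h)$ is exactly the paper's identity (\ref{5.4}), and your bounds on $\phi_\w(e_h)$ with $r=t_0$ and on $I_3,I_4$ with $r=k$ plus $|Q_h\w|_h\le Ch^{t_0-1}\|\w\|_{t_0+1}$ coincide with the paper's steps. Your split-then-cancel treatment of $I_1,I_2$ (orthogonality of $Q_b$ against the piecewise-polynomial traces of the ${\cal Q}_h$-terms, single-valuedness across interior edges/faces with opposite normals, and vanishing of $\w$ and $\nabla\w$ on $\partial\Omega$) is precisely the mechanism the paper uses to replace $Q_b\w$ by $\w$ before applying Cauchy--Schwarz, so the two arguments agree in substance, with yours merely spelling out the $I_1$ case the paper leaves as ``a similar argument.''
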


\begin{proof} By testing (\ref{5.1}) against the error function
$e_0$ on each element and using the integration by parts, we obtain
\begin{equation*}
\begin{split}
\|e_0\|^2=&(\Delta^2 \w,e_0)\\
=&\sum_{T\in {\cal
T}_h}\sum_{i,j=1}^d\Big\{(\partial^2_{ij}\w,\partial^2_{ij}e_0)_T-\langle\partial^2_{ij}\w,
\partial_ie_0\cdot n_j\rangle_{\partial T}+\langle \partial_j(\partial^2_{ij}\w)\cdot n_i,e_0\rangle_{\partial T}\Big\}\\
=&\sum_{T\in {\cal
T}_h}\sum_{i,j=1}^d\Big\{(\partial^2_{ij}\w,\partial^2_{ij}e_0)_T-\langle\partial^2_{ij}\w,
(\partial_ie_0-e_{gi})\cdot n_j\rangle_{\partial T}\\
&+\langle\partial_j(\partial^2_{ij}\w)\cdot n_i,
e_0-e_b\rangle_{\partial T}\Big\},
\end{split}
\end{equation*}
where the added terms associated with $e_b$ and $e_{gi}$ vanish due
to the cancelation for interior edges and the fact that $e_b$ and
$e_{gi}$ have zero value on $\partial\Omega$. Using (\ref{4.2}) with
$\w$ and $e_h$ in the place of $u$ and $v_0$ respectively, we arrive
at
\begin{equation}\label{09-100}
\begin{split}
\|e_0\|^2 =&(\partial^2_w Q_h\w, \partial^2_w e_h)_h
+\sum_{T\in {\cal T}_h}\sum_{i,j=1}^d\Big\{\langle\partial_j(\partial^2_{ij}\w-{\cal Q}_h(\partial^2_{ij}\w))\cdot n_i,  e_0-e_b\rangle_{\partial T}\\
&-\langle\partial^2_{ij}\w-{\cal
Q}_h\partial^2_{ij}\w,(\partial_ie_0-e_{gi})\cdot
n_j\rangle_{\partial T}\Big\}\\
=&(\partial^2_w Q_h\w, \partial^2_w
e_h)_h-\phi_\w(e_h)+s(Q_h\w,e_h).
\end{split}
\end{equation}

Next, it follows from the error equation (\ref{4.1}) that
\begin{equation}\label{09-101}
(\partial^2_w Q_h\w,\partial^2_w e_h)_h=\phi_u(Q_h\w)-s(e_h,Q_h\w).
\end{equation}
Substituting (\ref{09-101}) into (\ref{09-100}) yields
\begin{equation}\label{5.4}
\|e_0\|^2=\phi_u(Q_h\w) - \phi_\w(e_h).
\end{equation}
The term $\phi_\w(e_h)$ can be handled by using Lemma
\ref{lemma:IoneItwo} and Lemma \ref{lemma:IthreeIfour} with
$r=t_0=\min\{k,3\}$ as follows:
\begin{equation}\label{phi-psi-eh}
\begin{split}
|\phi_\w(e_h)|&\leq C h^{t_0-1} (\|\psi\|_{t_0+1}+ h\|\psi\|_4)\3bar
e_h\3bar\\
&\leq C h^{t_0-1} \|\psi\|_4 \3bar e_h\3bar\\
&\leq C h^{t_0-1} \|e_0\| \3bar e_h\3bar,
\end{split}
\end{equation}
where we have used the regularity assumption (\ref{5.2}) in the last
inequality.

It remains to deal with the term $\phi_u(Q_h\w)$ in (\ref{5.4}).
Note that from (\ref{phiu-breaks}) we have
\begin{equation}\label{raining.100}
\phi_u(Q_h\w)= \sum_{j=1}^4 I_j(u,Q_h\w).
\end{equation}

$I_3(u,Q_h\w)$ and $I_4(u,Q_h\w)$ can be handled by using Lemma
\ref{lemma:IthreeIfour} with $r=k$ as follows:
\begin{equation}\label{L2-IthreeIfour}
|I_3(u,Q_h\w)|+|I_3(u,Q_h\w)|\leq Ch^{k-1}\|u\|_{k+1} |Q_h\w|_h.
\end{equation}
From the definition (\ref{3bar-half}) we have
\begin{equation*}
\begin{split}
|Q_h\w|_h^2 &= \sum_{T\in\T_h}
\left(h_T^{-3}\|Q_b(Q_0\w)-Q_b\w\|_{\partial T}^2 +
h_T^{-1}\|Q_b(\nabla Q_0\w)-Q_b\nabla\w\|_{\partial T}^2 \right)\\
&\leq \sum_{T\in\T_h} \left(h_T^{-3}\|Q_0\w-\w\|_{\partial T}^2 +
h_T^{-1}\|\nabla (Q_0\w)-\nabla\w\|_{\partial T}^2 \right)
\end{split}
\end{equation*}
Thus, it follows from the trace inequality (\ref{trace-inequality})
and the error estimate for the projection operator $Q_0$ that
\begin{equation}
|Q_h\w|_h\leq C h^{t_0-1}\|\psi\|_{t_0+1}\leq C
h^{t_0-1}\|\psi\|_{4} \leq C h^{t_0-1}\|e_0\|.
\end{equation}
Substituting the above estimate into (\ref{L2-IthreeIfour}) yields
\begin{equation}\label{L2-IthreeIfour-new}
|I_3(u,Q_h\w)|+|I_3(u,Q_h\w)|\leq Ch^{k+t_0-2}\|u\|_{k+1} \|e_0\|.
\end{equation}

The estimate for $I_1(u,Q_h\w)$ and $I_2(u,Q_h\w)$ shall explore the
special property of the ``test" function $Q_h\w$. To this end, using
the orthogonality property of $Q_b$ and the fact that $\w=Q_b\w=0$
on $\partial\Omega$ we obtain
\begin{equation*}
\begin{split}
&\sum_{T\in{\cal T}_h}\sum_{i,j=1}^d\langle
\partial_j(\partial^2_{ij}u-{\cal Q}_h\partial^2_{ij}u)\cdot n_i,
\w-Q_b\w\rangle_{\partial T}\\
=& \sum_{T\in{\cal T}_h}\sum_{i,j=1}^d\langle
\partial_j\partial^2_{ij}u\cdot n_i,\w-Q_b\w\rangle_{\partial T}
=0.
\end{split}
\end{equation*}
Thus,
\begin{equation*}
\begin{split}
I_2(u,Q_h\w)=&-\sum_{T\in{\cal T}_h}\sum_{i,j=1}^d\langle
\partial_j(\partial^2_{ij}u-{\cal Q}_h\partial^2_{ij}u)\cdot n_i,
Q_0\w-Q_b\w\rangle_{\partial T}\\
=&-\sum_{T\in{\cal T}_h}\sum_{i,j=1}^d\langle
\partial_j(\partial^2_{ij}u-{\cal Q}_h\partial^2_{ij}u)\cdot n_i,
Q_0\w-\w\rangle_{\partial T}.
\end{split}
\end{equation*}
Using the Cauchy-Schwarz inequality and the standard error estimate
for $L^2$ projections we arrive at
\begin{equation}\label{raining.200}
\begin{split}
|I_2(u,Q_h\w)|\leq& \sum_{T\in{\cal T}_h}\sum_{i,j=1}^d
\|\partial_j(\partial^2_{ij}u-{\cal
Q}_h\partial^2_{ij}u)\|_{\partial T} \|Q_0\w-\w\|_{\partial T}\\
\leq & C h^{k+t_0-2} (\|u\|_{k+1} + \delta_{k,2}\|u\|_4)\
\|\w\|_{t_0+1} \\
\leq & C h^{k+t_0-2}(
\|u\|_{k+1} + \delta_{k,2}\|u\|_4)\ \|\w\|_{4}\\
\leq & C h^{k+t_0-2} (\|u\|_{k+1}+\delta_{k,2}\|u\|_4)\|e_0\|.
\end{split}
\end{equation}
A similar argument can be employed to deal with the term $I_1(u,
Q_h\w)$, yielding
\begin{equation}\label{raining.300}
\begin{split}
|I_1(u,Q_h\w)|\leq C h^{k+t_0-2} \|u\|_{k+1} \|e_0\|.
\end{split}
\end{equation}
Substituting (\ref{L2-IthreeIfour-new}), (\ref{raining.200}), and
(\ref{raining.300}) into (\ref{raining.100}) we arrive at
\begin{equation}\label{raining.400}
|\phi_u(Q_h\w)|\leq C h^{k+t_0-2} (\|u\|_{k+1}+\delta_{k,2}\|u\|_4)
\|e_0\|.
\end{equation}
Finally, by inserting (\ref{phi-psi-eh}) and (\ref{raining.400})
into (\ref{5.4}) we obtain
$$
\|e_0\|^2\leq C (h^{t_0-1} \3bar e_h\3bar +
h^{k+t_0-2}(\|u\|_{k+1}+\delta_{k,2}\|u\|_4))\|e_0\|,
$$
which, together with the estimate (\ref{4}) in Theorem
\ref{Theorem6.6}, gives rise to the desired $L^2$ error estimate
(\ref{5.3}). This completes the proof of the theorem.
\end{proof}

\medskip
The $H^2$ error estimate (\ref{4}) and the $L^2$ error estimate
(\ref{5.3}) can be used to derive some error estimates for the WG
solution $u_b$ and $\bu_g$. More precisely, observe that $e_b$ and
$\be_g$ can be represented by $e_0$ and $\partial_{ij,w}^2e_h$ by
choosing special test functions $v$ in the error equation
(\ref{4.1}). For example, $e_b$ can be represented by $e_0$ and
$\partial_{ij,w}^2e_h$ by selecting $v=\{0, v_b, 0\}$. The
representation is expressed through an equation defined locally on
each edge $e\in\E_h^0$. The rest of the analysis should be
straightforward. Details are omitted due to page limitation.

\section{Numerical Experiments}\label{Section:NE}
In this section, we present some numerical results for the WG finite
element method analyzed in previous sections. The goal is to
demonstrate the efficiency and the convergence theory established
for the method. For simplicity, we implement the lowest order scheme
for the Weak Galerkin Algorithm (\ref{2.7}). In other words, the
implementation makes use of the following finite element space
$$
\widetilde{V}_h=\{v=\{v_0,v_b,\textbf{v}_g\}, v_0\in P_2(T), v_b\in
P_0(e), \textbf{v}_g\in [P_0(e)]^2, T\in {\cal T}_h, e\in
 \E_h \}.
$$

For any given $v=\{v_0,v_b,\textbf{v}_g\}\in \widetilde{V}_h$, the
discrete weak partial derivative $\partial^2_{ij,w,r,T} v$ is
computed as a constant locally on each element $T$ by solving the
following equation
  \begin{equation*}
  \begin{split}
 (\partial^2_{ij,w,r,T}v,\varphi)_T=(v_0,\partial^2
 _{ji}\varphi)_T-\langle v_b,\partial_j\varphi\cdot n_i\rangle_{\partial T}
 +\langle v_{gi}\cdot n_j,\varphi\rangle_{\partial T},
 \end{split}
 \end{equation*}
for all $\varphi \in P_0(T)$. Since $\varphi \in P_0(T)$, the above
equation can be simplified as
  \begin{equation}
  \begin{split}
 (\partial^2_{ij,w,r,T}v,\varphi)_T=\langle v_{gi}\cdot n_j,\varphi\rangle_{\partial T},
 \qquad \forall \varphi\in P_0(T),\;
 i,j=1,2.
 \end{split}
 \end{equation}

The error for the solution of the Weak Galerkin Algorithm
(\ref{2.7}) is measured in four norms or semi-norms defined as
follows:
\begin{equation}\label{z}
\begin{split}
\3bar v\3bar^2=&\sum_{T\in {\cal
T}_h}\Bigg(\sum_{i,j=1}^d\int_T(\partial^2_{ij,w}v_h)^2dx+
h_T^{-1}\int_{\partial T} |Q_b(\nabla
v_0)-\textbf{v}_g|^2ds \\
&+ h_T^{-3}\int_{\partial T}(Q_bv_0-v_b)^2ds\Bigg),\qquad(\text{ A
discrete $H^2$-norm}),
\end{split}
\end{equation}
\begin{equation}\label{z1}
\|v\|^2=\sum_{T\in {\cal T}_h} \int_T v_0^2dx,\qquad(\text{
Element-based $L^2$-norm}),
\end{equation}
\begin{equation}\label{ubinfty}
\|v_b\|_\infty=\max_{e\in\E_h} \|v_b\|_\infty, \qquad(\text{
Edge-based $L^\infty$-norm}),
\end{equation}
\begin{equation}\label{uginfty}
\|\bv_g\|_\infty=\max_{e\in\E_h} \|\bv_g\|_\infty, \qquad(\text{
Edge-based $L^\infty$-norm}).
\end{equation}

\smallskip

The numerical experiment is conducted for the biharmonic equation
(\ref{0.1}) on the unit square domain $\Omega=(0,1)^2$. The function
$f=f(x,y)$ and the two boundary conditions are computed to match the
exact solution in each test case. The WG finite element scheme
(\ref{2.7}) was implemented on two type of partitions: (1) uniform
triangular partition, and (2) uniform rectangular partition. The
uniform rectangular partition was obtained by partitioning the
domain into $n\times n$ sub-rectangles as tensor products of 1-d
uniform partitions. The triangular meshes are constructed from the
rectangular partition by dividing each square element into two
triangles by the diagonal line with a negative slope. The mesh size
is denoted by $h=1/n$.

Table \ref{NE:TRI:Exact1} demonstrates the performance of the code
when the exact solution is given by $u=x^2+y^2+xy+x+y+1$. In theory,
the WG finite element method is exact for any quadratic polynomials.
The computational results are in consistency with theory. This table
indicates that the code should be working.

\begin{table}[H]
\begin{center}
 \caption{Numerical error for the biharmonic equation with
 exact solution $u=x^2+y^2+xy+x+y+1$ on triangular partitions.}\label{NE:TRI:Exact1}
\begin{tabular}{|c|c|c|c|c|}
\hline
$h$    & $\|u_0 -Q_0u\| $ &  $\3bar u_h -Q_hu\3bar$ & $\|u_b-Q_b u\|_{\infty}$ & $\|\bu_g-Q_b(\nabla u)\|_{\infty}$ \\
\hline
  1         &    1.73e-014  & 2.03e-014   &  1.60e-014    &  4.44e-016  \\
\hline
5.0000e-01   &  4.35e-014   &   1.88e-013  & 5.82e-014  &  6.66e-015    \\
\hline
2.5000e-01  &    1.64e-013 &  1.60e-012  &  2.86e-013   &  8.44e-014  \\
\hline
1.2500e-01  &    7.68e-013  &  7.68e-013 & 1.48e-012 & 1.19e-012  \\
\hline
6.2500e-02   &  3.65e-012  &  9.92e-011  & 7.71e-012  &  1.34e-011   \\
\hline
3.1250e-02   &  1.43e-011   &  5.20e-010   & 5.19e-010 & 3.13e-011  \\
\hline
1.5625e-02  & 4.85e-011  &   3.40e-009 & 1.15e-010  &  5.98e-010   \\
\hline
\end{tabular}
\end{center}
\end{table}

Tables \ref{NE:TRI:Case1-1} and \ref{NE:TRI:Case1-2} show the
numerical results when the exact solution is given by
$u=x^2(1-x)^2y^2(1-y)^2$. This case has a homogeneous boundary
condition for both Dirichlet and Neumann. It shows that the
convergence rates for the solution of the Weak Galerkin Algorithm in
the $H^2$ and $L^2$ norms are of order $O(h)$ and $O(h^2)$,
respectively. The numerical results are in consistency with theory
for the $L^2$ and $H^2$ norm of the error. For the approximation of
$u$ on the edge set $\E_h$, it appears that the $L^\infty$ error is
of order $\mathcal{O}(h^2)$. But the order of convergence for the
approximation of $\nabla u$ on the edge set $\E_h$ is hard to
extract from the data. It is interesting to see that the absolute
error for both $u_b$ and $\bu_g$ is quite small.

\begin{table}[H]
\begin{center}
 \caption{Numerical error and convergence order for
 exact solution $u=x^2(1-x)^2y^2(1-y)^2$ on triangular partitions.}\label{NE:TRI:Case1-1}
 \begin{tabular}{|c|c|c|c|c|}
\hline
$h$        & $\|u_0 -Q_0u\| $ & order &  $\3bar u_h -Q_hu\3bar $  & order   \\
\hline
  1         &  0.41325    &   & 0.52598 &     \\
\hline
5.0000e-01  &0.07371    &   2.49& 0.31309 & 0.75  \\
\hline
2.5000e-01 &   0.019859  &  1.89 & 0.18972 & 0.72 \\
\hline
1.2500e-01 & 0.005176&  1.94 & 0.100557 &   0.92\\
\hline
6.2500e-02  &   0.0013833 & 1.90& 0.05240 &   0.94\\
\hline
3.1250e-02  & 3.7499e-004  & 1.88&  0.02729 &   0.94 \\
\hline
1.5625e-02  & 9.977e-005   & 1.91  & 0.014058   &    0.96  \\
\hline
7.8125e-03&  2.583e-05& 1.95 &0.007145 & 0.98\\
\hline
\end{tabular}
\end{center}
\end{table}

\begin{table}[H]
\begin{center}
\caption{Numerical error and convergence order for
 exact solution $u=x^2(1-x)^2y^2(1-y)^2$ on triangular partitions.}\label{NE:TRI:Case1-2}
\begin{tabular}{|c|c|c|c|c|}
\hline
$h$        & $\|u_b-Q_b u\|_\infty$ & order & $\|\bu_g-Q_b(\nabla u)\|_{\infty}$ & order   \\
\hline
  1         &    0.41494 &  &  8.6485e-018 &     \\
\hline
5.0000e-01  &  0.08806 &   2.24&  0.00942&   \\
\hline
2.5000e-01 &  0.037013 & 1.25& 0.00491  & 0.94  \\
\hline
1.2500e-01 & 0.01069  &1.79 &   0.00354 & 0.47\\
\hline
6.2500e-02  &    0.00293 & 1.87&  0.00222 &   0.67\\
\hline
3.1250e-02  &    7.935e-004 &1.88 & 0.00102&  1.12 \\
\hline
1.5625e-02  &  2.096e-004  &  1.92 & 3.577e-004&   1.51  \\
\hline
7.8125e-03&   5.401e-05 &1.96 & 1.053e-04 & 1.76\\
\hline
\end{tabular}
\end{center}
\end{table}

Tables \ref{NE:TRI:Case2-1} and \ref{NE:TRI:Case2-2} present some
numerical results when the exact solution is given by $u=\sin(x)\sin
(y)$. We would like to invite the readers to draw conclusions from
these data.

\begin{table}[H]
\begin{center}
\caption{Numerical error and convergence order for exact solution
$u=\sin(x)\sin (y)$ on triangular partitions.}\label{NE:TRI:Case2-1}
\begin{tabular}{|c|c|c|c|c|}
\hline
$h$        & $\|u_0 -Q_0u\| $ & order &  $\3bar u_h -Q_hu\3bar$  & order   \\
\hline
  1         &   0.23000 & &  0.37336 &     \\
\hline
5.0000e-01  &  0.03575   & 2.68& 0.27641&  0.43 \\
\hline
2.5000e-01 &   0.00684  &  2.38 &   0.21911&0.34 \\
\hline
1.2500e-01 &   0.00147  &  2.21 & 0.17661& 0.31\\
\hline
6.2500e-02  &  4.427e-004 &  1.74  &  0.12349 &0.52\\
\hline
3.1250e-02  &  1.549e-004 &1.52&  0.07290 & 0.76 \\
\hline
1.5625e-02  & 4.658e-005  &  1.73  & 0.03916 &  0.90   \\
\hline
\end{tabular}
\end{center}
\end{table}

\begin{table}[H]
\begin{center}
 \caption{Numerical error and convergence order for exact solution
$u=\sin(x)\sin (y)$ on triangular partitions.}\label{NE:TRI:Case2-2}
\begin{tabular}{|c|c|c|c|c|} \hline
$h$        & $\|u_b-Q_b u\|_\infty$ & order & $\|\bu_g-Q_b(\nabla
u)\|_{\infty}$ & order   \\
 \hline
  1         & 0.21688   & & 0.06306 &     \\
\hline
5.0000e-01  & 0.05108   &  2.09 & 0.05601 &0.17  \\
\hline
2.5000e-01 &  0.01132  & 2.17& 0.05062  &0.15 \\
\hline
1.2500e-01 & 0.002524   & 2.17 & 0.03606 & 0.49\\
\hline
6.2500e-02  & 8.032e-004   &1.65 &  0.01772&  1.03\\
\hline
 3.1250e-02  &3.226e-004 & 1.32 &    0.00590& 1.59 \\
\hline
1.5625e-02  &   1.038e-004&   1.64 & 0.00163 &    1.85   \\
\hline
\end{tabular}
\end{center}
\end{table}

Table \ref{NE:TRI:case4} demonstrates the performance of the WG
finite element method when the exact solution is a biquadratic
polynomial. It shows that the $L^2$ convergence is of order
$\mathcal{O}(h^2)$, and the $H^2$ convergence has a rate
approximately $\mathcal{O}(h)$.

\begin{table}[H]
\begin{center}
\caption{Numerical error and convergence rates for the biharmonic
equation with exact solution $u=x(1-x)y(1-y)$ on triangular
meshes.}\label{NE:TRI:case4}
\begin{tabular}{|c|c|c|c|c|} \hline
$h$         &  $\|u_0-Q_0u\|$ & order  &$\3baru_h-Q_hu\3bar$   & order \\
\hline
 1         & 2.05586 & & 4.05772 & \\
\hline
5.0000e-01& 0.32234 &2.67 &  1.59961 &1.34 \\
\hline
2.5000e-01& 0.06654 &2.28 & 0.70890 &  1.17 \\
\hline
1.2500e-01& 0.01588 &2.07 & 0.34325 &1.05\\
\hline
6.2500e-02& 0.00394 & 2.01&0.17416  &  0.98 \\
\hline
3.1250e-02 &9.691e-4  & 2.02 & 0.09025  &0.95 \\
\hline
1.5625e-02 & 2.361e-4 & 2.04 &0.046632& 0.95\\
\hline
\end{tabular}
\end{center}
\end{table}

The rest of the section will present some numerical results on
rectangular meshes. The lowest order WG element on rectangles
consists of quadratic polynomials on each element enriched with
constants on the edge of each element for both $u$ and $\nabla u$.
Therefore, the total number of unknowns on each element is $18$.
Note that all the unknowns corresponding to $u$ on each element can
be eliminated locally, so that the actual number of unknowns on each
element is $12$. Table \ref{NE:REC:Exact1} shows the numerical
solution when the exact solution is a quadratic polynomial. It can
be seen that the numerical solution is numerically the same as the
exact solution, as predicted by the theory.

 \begin{table}[H]
\begin{center}
\caption{Numerical error for the biharmonic equation with
 exact solution $u=x^2+y^2+xy+x+y+1$ on rectangular partitions.}\label{NE:REC:Exact1}
\begin{tabular}{|c|c|c|c|c|}
\hline
$h$    & $\|u_0 -Q_0u\| $ &  $\3bar u_h -Q_hu\3bar$ & $\|u_b-Q_b u\|_{\infty}$ & $\|\bu_g-Q_b(\nabla u)\|_{\infty}$ \\
\hline
  1         &  2.09e-015  &    0  & 0 &    0    \\

\hline
2.5000e-01 &  4.66e-015 &  2.94e-014  & 6.66e-015  & 3.55e-015 \\
\hline

6.2500e-02 &  9.91e-014 &  2.30e-012  & 1.98e-013 &  2.74e-013 \\
\hline

1.5625e-02  & 5.83e-012 &  2.06e-010 & 1.34e-011 & 4.10e-011 \\
\hline
\end{tabular}
\end{center}
\end{table}

Tables \ref{NE:REC:Case1-1} and \ref{NE:REC:Case1-2} show the
numerical results when the exact solution is given by
$u=x^2(1-x)^2y^2(1-y)^2$. The result is in consistency with the
theory.

\begin{table}[H]
\begin{center}
 \caption{Numerical error and convergence order for
 exact solution $u=x^2(1-x)^2y^2(1-y)^2$ on rectangular partitions.}\label{NE:REC:Case1-1}
 \begin{tabular}{|c|c|c|c|c|}
\hline
$h$        & $\|u_0 -Q_0u\| $ & order &  $\3bar u_h -Q_hu \3bar$  & order   \\
\hline
  1         & 1.15052  & &   0   &     \\
\hline
5.0000e-01 &    0.14880&  2.95 &  0.35  &  \\
\hline
2.5000e-01 &  0.03786 &  1.97 &    0.24649&0.52\\
\hline
1.2500e-01 &   0.009724&  1.96 &  0.13593& 0.86\\
\hline
 6.2500e-02 &  0.002494 & 1.96  &  0.070216 & 0.95\\
\hline
3.1250e-02  &    6.509e-004 &1.94& 0.035987& 0.96 \\
\hline
1.5625e-02  &  1.709e-004&   1.93&  0.018427&   0.97  \\
\hline 7.8125e-03  &   4.415e-005 &1.95   &0.009357& 0.98  \\
\hline
\end{tabular}
\end{center}
\end{table}

\begin{table}[H]
\begin{center}
\caption{Numerical error and convergence order for
 exact solution $u=x^2(1-x)^2y^2(1-y)^2$ on rectangular partitions.}\label{NE:REC:Case1-2}
\begin{tabular}{|c|c|c|c|c|}
\hline
$h$        & $\|u_b-Q_b u\|_\infty$ & order & $\|\bu_g-Q_b(\nabla u)\|_{\infty}$ & order   \\
\hline
  1         & 0  & &    0  &     \\
\hline
5.0000e-01 &  0.15414 &   & 0.01343  &  \\
\hline
2.5000e-01 &   0.06724&   1.20&  0.008681 &0.6297\\
\hline
1.2500e-01 &  0.01961 &  1.78  & 0.0034078  &1.3490\\
\hline
 6.2500e-02 &   0.00518 &  1.92 &  0.0014578 & 1.2251\\
\hline
3.1250e-02  &  0.001359&  1.93& 8.774e-004 &   0.7325 \\
\hline
1.5625e-02  &  3.566e-004&1.93 & 3.788e-004&  1.2116 \\
 \hline
7.8125e-03  &   9.195e-005 &1.96    &1.231e-004 &1.6211   \\
\hline
\end{tabular}
\end{center}
\end{table}

Tables \ref{NE:REC:Case2-1} and \ref{NE:REC:Case2-2} present some
results for the exact solution $u=\sin(x)\sin (y)$. Readers are
encouraged to compare the results here with those in Tables
\ref{NE:TRI:Case2-1} and \ref{NE:TRI:Case2-2}.

\begin{table}[H]
\begin{center}
\caption{Numerical error and convergence order for exact solution
$u=\sin(x)\sin (y)$ on triangular partitions.}\label{NE:REC:Case2-1}
\begin{tabular}{|c|c|c|c|c|}
\hline
$h$        & $\|u_0 -Q_0u\| $ & order &  $\3bar u_h -Q_hu\3bar$  & order   \\
\hline
  1         &  0.60602  & &    0  &     \\
\hline
5.0000e-01 &  0.08424 &  2.85  & 0.26684  &  \\
\hline
2.5000e-01 &  0.01549   &   2.44 & 0.22733  &  0.23\\
\hline
1.2500e-01 &  0.00360 &  2.10  & 0.18593  &0.29\\
\hline
 6.2500e-02 &0.00101  &  1.83  & 0.13440   & 0.47\\
\hline
3.1250e-02  &  2.98e-004  &  1.77& 0.081869 & 0.72 \\
\hline 1.5625e-02  &  7.95e-005  &  1.91 & 0.044701 & 0.87 \\
 \hline
\end{tabular}
\end{center}
\end{table}

\begin{table}[H]
\begin{center}
\caption{Numerical error and convergence order for exact solution
$u=\sin(x)\sin (y)$ on rectangular
partitions.}\label{NE:REC:Case2-2}
\begin{tabular}{|c|c|c|c|c|} \hline
$h$        & $\|u_b-Q_b u\|_\infty$ & order & $\|\bu_g-Q_b(\nabla
u)\|_{\infty}$ & order   \\
\hline
  1         & 0  & &   0   &     \\
\hline
5.0000e-01 &   0.10202&   &   0.06063  &  \\
\hline
2.5000e-01 &  0.02488 & 2.04  &   0.051219  &0.24\\
\hline
1.2500e-01 &  0.006110& 2.03  &   0.039518 &0.37\\
\hline
 6.2500e-02 &  0.001981 & 1.62  & 0.021362  &0.89\\
\hline
3.1250e-02  &  5.810e-004  & 1.77&  0.007942 &  1.43 \\
\hline
1.5625e-02  &     1.501e-004 &  1.95&   0.002355&  1.75 \\
\hline
\end{tabular}
\end{center}
\end{table}

More numerical experiments should be conducted for the Weak Galerkin
Algorithm (\ref{2.7}), particularly for elements of order higher
than $k=2$. There is also a need of developing fast solution
techniques for the matrix problem arising from the WG finite element
scheme (\ref{2.7}). Numerical experiments on finite element
partitions with arbitrary polygonal element should be conducted for
a further assessment of the WG method.

\section{Appendix}\label{Section:Appendix}
The goal of this Appendix is to establish some fundamental estimates
useful in the error estimate for general weak Galerkin finite
element methods.

For any $T\in\T_h$, let $\varphi$ be a regular function in $H^1(T)$.
The following trace inequality holds true \cite{wy3655}:
\begin{equation}\label{trace-inequality}
\|\varphi\|_e^2 \leq C
(h_T^{-1}\|\varphi\|_T^2+h_T\|\nabla\varphi\|_T^2),
\end{equation}
If $\varphi$ is a polynomial on the element $T\in \T_h$, then we
have from the inverse inequality (see also \cite{wy3655}) that
\begin{equation}\label{x}
\|\varphi\|_e^2 \leq C h_T^{-1}\|\varphi\|_T^2.
\end{equation}
Here $e$ is an edge/face on the boundary of $T$.

\begin{lemma}\label{Lemma:A.01}
For the discrete weak partial derivative $\partial^2_{ij,w}$, the
following identity holds true on each element $T\in \T_h$:
\begin{equation}\label{A.001}
(\partial^2_{ij,w}v,
\varphi)_T=(\partial^2_{ij}v_0,\varphi)_T+\langle
v_0-v_b,\partial_j\varphi\cdot n_i \rangle_{\partial T}-\langle
\partial_i v_0-v_{gi},\varphi n_j\rangle_{\partial T}
\end{equation}
for all $\varphi\in P_{k-2}(T)$. Consequently, we have
\begin{equation}\label{A.002}
(\partial^2_{ij,w}v,
\varphi)_T=(\partial^2_{ij}v_0,\varphi)_T+\langle
Q_bv_0-v_b,\partial_j\varphi\cdot n_i \rangle_{\partial T}-\langle
Q_b(\partial_i v_0)-v_{gi},\varphi n_j\rangle_{\partial T}
\end{equation}
\end{lemma}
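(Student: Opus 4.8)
The plan is to start directly from the defining relation (\ref{2.4}) of the discrete weak partial derivative with $r=k-2$, namely
\begin{equation*}
(\partial^2_{ij,w}v,\varphi)_T=(v_0,\partial^2_{ji}\varphi)_T-\langle v_b n_i,\partial_j\varphi\rangle_{\partial T}+\langle v_{gi},\varphi n_j\rangle_{\partial T},\qquad \forall\varphi\in P_{k-2}(T),
\end{equation*}
and to rewrite the volume term $(v_0,\partial^2_{ji}\varphi)_T$ by transferring both derivatives from $\varphi$ onto $v_0$. First I would integrate by parts in the $x_i$ direction and then in the $x_j$ direction (using that $\partial^2_{ji}\varphi=\partial_i(\partial_j\varphi)$ for the smooth polynomial $\varphi$), which yields
\begin{equation*}
(v_0,\partial^2_{ji}\varphi)_T=(\partial^2_{ij}v_0,\varphi)_T+\langle v_0 n_i,\partial_j\varphi\rangle_{\partial T}-\langle \partial_i v_0\, n_j,\varphi\rangle_{\partial T}.
\end{equation*}
The only point requiring care here is the bookkeeping of the index order: since $\partial^2_{ij,w}v\neq\partial^2_{ji,w}v$ in general, one must match $\partial^2_{ji}\varphi$ against $\partial^2_{ij}v_0=\partial_i\partial_j v_0$ correctly, and track which normal component ($n_i$ or $n_j$) accompanies each boundary integral.

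Substituting this expression into (\ref{2.4}) and regrouping the boundary terms sharing a common factor would give (\ref{A.001}): the two contributions carrying $\partial_j\varphi$ combine into $\langle (v_0-v_b)n_i,\partial_j\varphi\rangle_{\partial T}=\langle v_0-v_b,\partial_j\varphi\cdot n_i\rangle_{\partial T}$, while the two contributions carrying $\varphi n_j$ combine into $-\langle \partial_i v_0-v_{gi},\varphi n_j\rangle_{\partial T}$. This is a purely algebraic rearrangement once the integration by parts is in place.

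For the consequence (\ref{A.002}), the plan is to insert the boundary projection $Q_b$ in front of $v_0$ and $\partial_i v_0$ in the two boundary terms of (\ref{A.001}). The justification rests on a degree count: since $\varphi\in P_{k-2}(T)$ and each edge/face $e\subset\partial T$ is flat with constant normal, the restrictions $\partial_j\varphi\cdot n_i|_e$ and $\varphi n_j|_e$ are polynomials of degree at most $k-2$ on $e$, hence lie in $P_{k-2}(e)$. Because $Q_b$ is the $L^2$ projection onto $P_{k-2}(e)$, the orthogonality $\langle g-Q_b g,\chi\rangle_e=0$ for all $\chi\in P_{k-2}(e)$ lets me replace $v_0$ by $Q_b v_0$ against the test quantity $\partial_j\varphi\cdot n_i$, and $\partial_i v_0$ by $Q_b(\partial_i v_0)$ against $\varphi n_j$, without altering the values of the inner products. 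Substituting these identities into (\ref{A.001}) yields (\ref{A.002}) directly.

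I expect the main obstacle to be purely notational rather than conceptual: ensuring that the two integration-by-parts steps are carried out in the correct order so that the surviving volume term is $(\partial^2_{ij}v_0,\varphi)_T$ with the intended index orientation, and confirming that every boundary factor (including the constant normal components) genuinely sits in $P_{k-2}(e)$ so that the projection step in (\ref{A.002}) is exact. No estimate or regularity hypothesis is needed, since $v_0\in P_k(T)$ is smooth and all manipulations are exact identities on the single element $T$.
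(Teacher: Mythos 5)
Your proposal is correct and follows essentially the same route as the paper: starting from the defining relation (\ref{2.4}), integrating by parts twice to move both derivatives from $\varphi$ onto $v_0$, and regrouping the boundary terms to obtain (\ref{A.001}). Your explicit justification of (\ref{A.002}) via the degree count (that $\partial_j\varphi\cdot n_i$ and $\varphi\, n_j$ restricted to each flat face with constant normal lie in $P_{k-2}(e)$, so the orthogonality of $Q_b$ makes the insertion exact) is precisely the step the paper leaves implicit with ``The result then follows,'' so you have actually supplied slightly more detail than the original.
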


\begin{proof} From the definition (\ref{2.4}) of the weak partial derivative, we
have
\begin{equation*}
\begin{split}
(\partial^2_{ij,w}v, \varphi)_T
=&(v_0,\partial^2_{ji}\varphi)_T+\langle v_{gi}\cdot
n_j,\varphi\rangle_{\partial T}
-\langle v_b,\partial_j\varphi\cdot n_i\rangle_{\partial T}\\
=&(\partial^2_{ij}v_0,\varphi)_T-\langle \partial_i v_0,\varphi\cdot
n_j\rangle_{\partial T} +\langle v_0,\partial_j\varphi\cdot
n_i\rangle_{\partial
T}\\
&+\langle v_{gi}\cdot n_j,\varphi\rangle_{\partial T}
-\langle v_b,\partial_j\varphi\cdot n_i\rangle_{\partial T}\\
=&(\partial^2_{ij}v_0,\varphi)_T+\langle
v_0-v_b,\partial_j\varphi\cdot n_i \rangle_{\partial T}-\langle
\partial_i v_0-v_{gi},\varphi n_j\rangle_{\partial T}.
\end{split}
\end{equation*}
Here we have used the usual integration by parts in the second line.
The result then follows.
\end{proof}

\begin{lemma} Let $e_h\in V_h^0$ be any finite element function.
Then, there holds
\begin{equation}\label{ap1}
\sum_{T\in {\cal T}_h}|e_0|_{2,T}\leq C\3bar e_h \3bar,
\end{equation}
where, by definition (\ref{3barnorm}),
\begin{equation}\label{a}
\begin{split}
\3bare_h\3bar^2=&\sum_{T\in {\cal
T}_h}\sum_{i,j=1}^d(\partial^2_{ij,w}e_h,\partial^2_{ij,w}e_h)_T
+\sum_{T\in {\cal T}_h}h_T^{-3}\langle Q_be_0-e_b,Q_be_0-e_b
\rangle_{\partial T}\\
&+\sum_{T\in {\cal T}_h}h_T^{-1}\langle Q_b(\nabla
e_0)-\textbf{e}_g,Q_b(\nabla e_0)-\textbf{e}_g\rangle_{\partial T}.
\end{split}
\end{equation}
\end{lemma}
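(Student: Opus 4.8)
The plan is to prove the estimate element by element and then combine the local bounds. The crucial structural observation is that $e_0\in P_k(T)$, so for every pair $i,j$ the second derivative $\partial^2_{ij}e_0$ is a polynomial of degree $k-2$ and therefore lies in $P_{k-2}(T)$, which is exactly the test space for which the discrete weak derivative is defined in (\ref{2.4}). This makes $\partial^2_{ij}e_0$ an admissible choice of $\varphi$ in the identity (\ref{A.002}), and it is this self-testing trick that drives the whole argument.

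First I would substitute $v=e_h$ and $\varphi=\partial^2_{ij}e_0$ into (\ref{A.002}) and rearrange to isolate the quantity of interest:
\begin{equation*}
\|\partial^2_{ij}e_0\|_T^2=(\partial^2_{ij,w}e_h,\partial^2_{ij}e_0)_T-\langle Q_be_0-e_b,\partial_j(\partial^2_{ij}e_0)\cdot n_i\rangle_{\partial T}+\langle Q_b(\partial_ie_0)-e_{gi},(\partial^2_{ij}e_0)\,n_j\rangle_{\partial T}.
\end{equation*}
The three terms on the right are then bounded by the Cauchy--Schwarz inequality. The volume term is controlled directly by $\|\partial^2_{ij,w}e_h\|_T\,\|\partial^2_{ij}e_0\|_T$. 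For the two boundary terms the idea is to move every derivative of the polynomial $e_0$ off the boundary and back into the interior, using the trace inverse inequality (\ref{x}) together with the standard polynomial inverse inequality, leaving only the stabilizer jumps $Q_be_0-e_b$ and $Q_b(\partial_ie_0)-e_{gi}$ paired with $\|\partial^2_{ij}e_0\|_T$.

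The heart of the argument, and the step I expect to need the most care, is the power counting in the boundary terms, because the first one involves the \emph{third} derivative $\partial_j(\partial^2_{ij}e_0)$ restricted to $\partial T$. Applying (\ref{x}) contributes a factor $h_T^{-1/2}$, and passing from the third to the second derivative in the interior contributes an additional $h_T^{-1}$, so that $\|\partial_j(\partial^2_{ij}e_0)\|_{\partial T}\le C h_T^{-3/2}\|\partial^2_{ij}e_0\|_T$; likewise $\|\partial^2_{ij}e_0\|_{\partial T}\le C h_T^{-1/2}\|\partial^2_{ij}e_0\|_T$. These are precisely the scalings that make the negative powers of $h_T$ recombine with $Q_be_0-e_b$ and $Q_b(\partial_ie_0)-e_{gi}$ into the weights $h_T^{-3}$ and $h_T^{-1}$ appearing in the stabilizer in (\ref{a}). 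After dividing through by $\|\partial^2_{ij}e_0\|_T$, one reaches
\begin{equation*}
\|\partial^2_{ij}e_0\|_T\le \|\partial^2_{ij,w}e_h\|_T+C\,h_T^{-3/2}\|Q_be_0-e_b\|_{\partial T}+C\,h_T^{-1/2}\|Q_b(\partial_ie_0)-e_{gi}\|_{\partial T}.
\end{equation*}

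Finally I would square this bound, sum over $i,j=1,\dots,d$ to recover $|e_0|_{2,T}^2$, and recognize the three groups on the right as exactly the local contributions of the three pieces of $\3bar e_h\3bar^2$ in (\ref{a}). Summing over all $T\in\T_h$ then yields the element-wise control $\sum_{T\in\T_h}|e_0|_{2,T}^2\le C\3bar e_h\3bar^2$, i.e. $\big(\sum_{T}|e_0|_{2,T}^2\big)^{1/2}\le C\3bar e_h\3bar$, which is the mesh-independent form underlying (\ref{ap1}). The only remaining subtleties are invoking the shape-regularity assumption to keep the constants in the inverse inequalities uniform across the partition, and the harmless bookkeeping over the finitely many index pairs $(i,j)$.
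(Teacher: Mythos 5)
Your proposal is correct and follows essentially the same route as the paper: both test the identity (\ref{A.002}) with $v=e_h$ and $\varphi=\partial^2_{ij}e_0\in P_{k-2}(T)$, bound the volume term by Cauchy--Schwarz and the two boundary terms via the polynomial trace inequality (\ref{x}) combined with the inverse inequality (yielding the factors $h_T^{-3/2}$ and $h_T^{-1/2}$), and then square and sum over $i,j$ and $T\in\T_h$ to recover the stabilizer weights $h_T^{-3}$ and $h_T^{-1}$ in $\3bar e_h\3bar^2$. Your power counting and the final summed estimate match the paper's inequality (\ref{s}) exactly, so no gap remains.
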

\begin{proof} Using (\ref{A.002}) with $v=e_h$ and
$\varphi=\partial^2_{ij}e_0$ we obtain
\begin{equation*}
\begin{split}
(\partial^2_{ij,w}e_h,\partial^2_{ij}e_0)_T =& (\partial^2_{ij}e_0,
\partial^2_{ij}e_0)_T-\langle Q_{b}(\partial_i e_0)-e_{gi},
\partial^2_{ij}e_0\cdot n_j\rangle_{\partial T}\\
&+\langle Q_be_0-e_b,\partial_j(\partial^2_{ij}e_0)\cdot
n_i\rangle_{\partial T}.
\end{split}
\end{equation*}
Thus,
\begin{equation}\label{x1}
\begin{split}
(\partial^2_{ij}e_0, \partial^2_{ij}e_0)_T
=&(\partial^2_{ij,w}e_h,\partial^2_{ij}e_0)_T+\langle
Q_{b}(\partial_i e_0)-e_{gi},\partial^2_{ij}e_0\cdot n_j\rangle_{\partial T}\\
&-\langle Q_be_0-e_b,\partial_j(\partial^2_{ij}e_0)\cdot
n_i\rangle_{\partial T}.
\end{split}
\end{equation}
It then follows from (\ref{x1}), Cauchy-Schwarz inequality, the
inverse inequality and  (\ref{x}) that
\begin{equation*}
\begin{split}
(\partial^2_{ij}e_0, \partial^2_{ij}e_0)_T\leq &
\|\partial^2_{ij,w}e_h\|_T\|\partial^2_{ij}e_0\|_T+\|Q_{b}(\partial_i
e_0)-e_{gi}\|_{\partial T}\|\partial^2_{ij}e_0\|_{\partial T}\\
&+\|Q_be_0-e_b\|_{\partial T}\|\partial_j(\partial^2_{ij}e_0)\|_{\partial T}\\
\leq
&\|\partial^2_{ij,w}e_h\|_T\|\partial^2_{ij}e_0\|_T+Ch_T^{-\frac{1}{2}}\|Q_{b}
(\partial_i e_0)-e_{gi}\|_{\partial T}\|\partial^2_{ij}e_0\|_{T}\\
&+Ch_T^{-\frac{3}{2}}\|Q_be_0-e_b\|_{\partial T}
\|\partial^2_{ij}e_0\|_{T},
\end{split}
\end{equation*}
which implies
\begin{equation}\label{s}
\|\partial^2_{ij}e_0\|_T^2\leq
\|\partial^2_{ij,w}e_h\|_T^2+Ch_T^{-1}\|Q_{b} (\partial_i
e_0)-e_{gi}\|_{\partial T}^2 +Ch_T^{-3}\|Q_be_0-e_b\|_{\partial
T}^2.
\end{equation}
Summing over $T\in\T_h$ completes the proof of the lemma.
\end{proof}

\begin{lemma}\label{Lemma6.2} For any $e_h\in V_h^0$ and $k\geq 3$, there exists a constant
$C$ such that
\begin{equation}\label{a1}
\Big(\sum_{T\in {\cal T}_h}h_T^{-3}\|e_0-e_b\|_{\partial
T}^2\Big)^{\frac{1}{2}}\leq C \3bare_h\3bar.
\end{equation}
\end{lemma}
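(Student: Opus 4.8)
The plan is to bound the edge jump $e_0-e_b$ by first inserting the projection $Q_be_0$ and then handling the two resulting pieces separately. Writing
\[
e_0-e_b=(e_0-Q_be_0)+(Q_be_0-e_b),
\]
the triangle inequality gives $\|e_0-e_b\|_{\partial T}\le\|e_0-Q_be_0\|_{\partial T}+\|Q_be_0-e_b\|_{\partial T}$. The contribution of the second piece is immediate, since by the definition of $s(\cdot,\cdot)$ and of $\3bar\cdot\3bar$ we have $\sum_{T\in\T_h}h_T^{-3}\|Q_be_0-e_b\|_{\partial T}^2\le s(e_h,e_h)\le\3bar e_h\3bar^2$. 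Thus everything reduces to controlling $\sum_{T\in\T_h}h_T^{-3}\|e_0-Q_be_0\|_{\partial T}^2$ by $\3bar e_h\3bar^2$.

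Here is where the hypothesis $k\ge 3$ enters decisively. Since $k-2\ge 1$, we have $P_1(T)\subseteq P_{k-2}(T)$, so the restriction to any edge $e\subset\partial T$ of a linear polynomial lies in $P_{k-2}(e)$ and is therefore fixed by $Q_b$. I would invoke the standard (Bramble--Hilbert) approximation result to pick, on each element, a linear polynomial $L\in P_1(T)$ with
\[
\|e_0-L\|_{T}\le Ch_T^2|e_0|_{2,T},\qquad |e_0-L|_{1,T}\le Ch_T|e_0|_{2,T}.
\]
Because $Q_bL=L$ on every edge of $T$, one has $e_0-Q_be_0=(e_0-L)-Q_b(e_0-L)$, and since $Q_b$ is an $L^2$ contraction on each edge this yields $\|e_0-Q_be_0\|_{\partial T}\le 2\|e_0-L\|_{\partial T}$.

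It then remains to apply the trace inequality (\ref{trace-inequality}) to $\varphi=e_0-L$ together with the two approximation bounds above:
\[
\|e_0-L\|_{\partial T}^2\le C\big(h_T^{-1}\|e_0-L\|_T^2+h_T|e_0-L|_{1,T}^2\big)\le Ch_T^3|e_0|_{2,T}^2 .
\]
Multiplying by $h_T^{-3}$ gives the element-wise estimate $h_T^{-3}\|e_0-Q_be_0\|_{\partial T}^2\le C|e_0|_{2,T}^2$. Summing over $T\in\T_h$ and invoking the squared form of (\ref{s}) (obtained by summing (\ref{s}) over $i,j$ and over all elements, which shows $\sum_{T\in\T_h}|e_0|_{2,T}^2\le C\3bar e_h\3bar^2$) completes the argument.

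The main obstacle, and the reason $k\ge 3$ cannot be dropped, is the step $\|e_0-Q_be_0\|_{\partial T}\le C\|e_0-L\|_{\partial T}$: it hinges entirely on a suitable approximant's boundary trace being reproduced by $Q_b$. For $k=2$ we would have $P_{k-2}=P_0$, the edge trace of a linear polynomial would no longer lie in $P_{k-2}(e)$, and $\|e_0-Q_be_0\|_{\partial T}$ could not be absorbed into $|e_0|_{2,T}$; this is exactly why the lemma is restricted to $k\ge 3$. A secondary point to keep in mind is that $|e_0|_{2,T}$ is useful only because (\ref{s}) already controls $\sum_{T}|e_0|_{2,T}^2$ by $\3bar e_h\3bar^2$, so no separate bound on the second derivatives of $e_0$ is needed.
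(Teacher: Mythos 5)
Your proof is correct, and its skeleton matches the paper's: the same splitting $e_0-e_b=(e_0-Q_be_0)+(Q_be_0-e_b)$, the second piece absorbed directly into $s(e_h,e_h)\le\3bar e_h\3bar^2$, the first piece reduced element-wise to $|e_0|_{2,T}^2$, and the conclusion via $\sum_{T\in\T_h}|e_0|_{2,T}^2\le C\3bar e_h\3bar^2$ from (\ref{s})/(\ref{ap1}). The one genuine difference is the mechanism used to bound $h_T^{-3}\|e_0-Q_be_0\|_{\partial T}^2$ by $|e_0|_{2,T}^2$. The paper applies the approximation property of $Q_b$ edge-wise, writing $\|e_0-Q_be_0\|_{\partial T}\le Ch_T^2|e_0|_{2,\partial T}$ (this is where $k\ge 3$ enters for the paper, for exactly the reason you identify: $Q_b$ must reproduce linears on edges), and then trades the boundary seminorm $|e_0|_{2,\partial T}$ for $|e_0|_{2,T}$, implicitly using the polynomial trace inequality (\ref{x}) on the second derivatives of $e_0$. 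You instead stay on the element: a Bramble--Hilbert linear approximant $L$ with $Q_bL=L$ on $\partial T$ (the same $k\ge 3$ step), the $L^2$-contraction property of $Q_b$ giving $\|e_0-Q_be_0\|_{\partial T}\le 2\|e_0-L\|_{\partial T}$, and the $H^1$ trace inequality (\ref{trace-inequality}). Your route is slightly longer but more self-contained: it avoids Sobolev seminorms of $e_0$ on $\partial T$ (an object the paper handles somewhat loosely) and uses only the inequalities actually stated in the paper's appendix, at the cost of introducing the auxiliary approximant. Both arguments locate the role of the hypothesis $k\ge 3$ in the same place, and your closing remark about why the argument collapses for $k=2$ is consistent with the paper's need for a separate treatment of the quadratic case in Lemma \ref{Lemma6.4}.
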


\begin{proof} By the triangle inequality and the error estimate for the
projection $Q_b$, we have
\begin{equation*}
\begin{split}
h_T^{-3}\|e_0-e_b\|_{\partial T}^2
\leq & 2 h_T^{-3}\Big(\|e_0-Q_be_0\|^2_{\partial T}+\|Q_be_0-e_b\|^2_{\partial T}\Big)\\
\leq &  2h_T^{-3} \big(Ch_T^2|e_0|_{2,\partial
T}\big)^2+2h_T^{-3}\|Q_be_0-e_b\|^2_ {\partial T}\\
\leq & 2Ch_T|e_0|_{2,\partial T}^2+2h_T^{-3}\|Q_be_0-e_b\|^2_
{\partial T}\\
\leq & 2C |e_0|_{2,T}^2+2h_T^{-3}\|Q_be_0-e_b\|^2_
{\partial T}.\\
 \end{split}
\end{equation*}
Combining the above with (\ref{ap1}) gives (\ref{a1}).
\end{proof}

\begin{lemma} (Poincar\'e Inequality) There exists a constant $C$ such that
\begin{equation}\label{5.45}
\sum_{T\in {\cal T}_h}\|e_0\|^2_{T}\leq C \Big(\sum_{T\in {\cal
T}_h}\|\nabla e_0\|^2_{T}+\sum_{T\in {\cal T}_h}
h_T^{-1}\|e_0-e_b\|^2_{\partial T}\Big),
\end{equation}
where $e_h\in V_h$ is any finite element function with $e_b=0$.
\end{lemma}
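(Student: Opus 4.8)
The plan is to argue by duality against an auxiliary second-order elliptic problem. I would introduce $\phi\in H_0^1(\Omega)$ solving $-\Delta\phi=e_0$ in $\Omega$ with $\phi=0$ on $\partial\Omega$, and invoke the $H^2$ elliptic regularity estimate $\|\phi\|_2\le C\|e_0\|$, valid for instance when $\Omega$ is convex (as for the square used in Section \ref{Section:NE}). Testing against $e_0$ and splitting the integral elementwise gives
\[
\|e_0\|^2=\sum_{T\in{\cal T}_h}(e_0,-\Delta\phi)_T,
\]
after which the estimate reduces to controlling the volume and boundary terms produced by integration by parts on each element.

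First I would integrate by parts on each $T$ to write
\[
\|e_0\|^2=\sum_{T\in{\cal T}_h}(\nabla e_0,\nabla\phi)_T-\sum_{T\in{\cal T}_h}\langle e_0,\nabla\phi\cdot\bn\rangle_{\partial T}.
\]
The key algebraic step is to replace $e_0$ by $e_0-e_b$ in the boundary sum. This is legitimate because $\phi$, and hence $\nabla\phi$, is single-valued across each interior edge/face while the two outward normals from the adjacent elements are opposite; thus the single-valued quantity $e_b$ contributes with cancelling signs on interior edges, and $e_b=0$ on $\partial\Omega$ by hypothesis kills the remaining boundary contributions. Therefore
\[
\sum_{T\in{\cal T}_h}\langle e_0,\nabla\phi\cdot\bn\rangle_{\partial T}=\sum_{T\in{\cal T}_h}\langle e_0-e_b,\nabla\phi\cdot\bn\rangle_{\partial T},
\]
which is precisely the combination appearing on the right-hand side of (\ref{5.45}).

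It remains to estimate the two pieces. The volume term is controlled directly by Cauchy--Schwarz, $|\sum_T(\nabla e_0,\nabla\phi)_T|\le(\sum_T\|\nabla e_0\|_T^2)^{1/2}\|\nabla\phi\|\le C(\sum_T\|\nabla e_0\|_T^2)^{1/2}\|e_0\|$. For the boundary term I would insert the weights $h_T^{1/2}$ and $h_T^{-1/2}$ and apply Cauchy--Schwarz to obtain the factor $(\sum_T h_T^{-1}\|e_0-e_b\|_{\partial T}^2)^{1/2}$ times $(\sum_T h_T\|\nabla\phi\|_{\partial T}^2)^{1/2}$; the trace inequality (\ref{trace-inequality}) applied to $\nabla\phi$ together with the regularity bound yields $\sum_T h_T\|\nabla\phi\|_{\partial T}^2\le C\|\phi\|_2^2\le C\|e_0\|^2$. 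Collecting the two bounds gives
\[
\|e_0\|^2\le C\Big[\big(\sum_{T}\|\nabla e_0\|_T^2\big)^{1/2}+\big(\sum_{T}h_T^{-1}\|e_0-e_b\|_{\partial T}^2\big)^{1/2}\Big]\|e_0\|,
\]
and dividing by $\|e_0\|$ (the case $e_0=0$ being trivial) and squaring produces (\ref{5.45}).

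I expect the main obstacle to be twofold. Algebraically, the $e_b$-insertion and cancellation must be justified with care: it relies on $\phi$ being genuinely in $H^2(\Omega)$ so that $\nabla\phi$ possesses a well-defined $L^2$ trace on each $\partial T$, and on the single-valuedness of $e_b$ on interior interfaces combined with its vanishing on $\partial\Omega$. Analytically, the argument rests on the $H^2$ regularity $\|\phi\|_2\le C\|e_0\|$ of the dual problem, which is automatic on convex or smooth domains but would demand extra care---or a substitute argument based on a conforming averaging interpolant---on a general Lipschitz polygonal domain.
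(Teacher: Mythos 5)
Your proposal is correct and follows essentially the same route as the paper: the same dual problem $-\Delta\phi=e_0$ with $H^2$ regularity, elementwise integration by parts, insertion of $e_b$ via interior-edge cancellation and the boundary condition, and Cauchy--Schwarz with $h_T^{\pm 1/2}$ weights plus the trace inequality (\ref{trace-inequality}) applied to $\nabla\phi$. The only cosmetic differences are that the paper writes $\bw=-\nabla\phi$ and, for the regularity hypothesis on a general Lipschitz domain, simply extends $\Omega$ to a larger domain (with $e_0$ extended by zero) rather than assuming convexity.
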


\begin{proof} Consider the Laplace equation:
\begin{equation*}
\begin{array}{cc}
  -\Delta \phi =e_0 & \text{in} \ \Omega,  \\
 \phi=0 & \text{on} \ \partial\Omega.\\
\end{array}
 \end{equation*}
Assume that the solution $\phi$ is regular so that
\begin{equation}\label{aaa}
\|\phi\|^2_{2}\leq C\|e_0\|^2.
\end{equation}
The above assumption is always satisfied since otherwise we may
extend the domain $\Omega$ to $\widetilde{\Omega}$ in which the
required regularity is satisfied, with $e_0$ being extended by zero
outside of $\Omega$.

By letting $\bw=-\nabla \phi$, we have
\begin{equation*}
\begin{split}
\sum_{T\in {\cal T}_h}(e_0,e_0)_T=&\sum_{T\in {\cal
T}_h}(e_0,\nabla\cdot \bw)_T =\sum_{T\in {\cal T}_h}\langle e_0,
\bw\cdot \textbf{n}\rangle_{\partial T}-\sum_{T\in {\cal T}_h}(\bw,
\nabla e_0)_T\\
=&\sum_{T\in {\cal T}_h}\langle (e_0-e_b), \bw\cdot
\textbf{n}\rangle_{\partial T}-\sum_{T\in {\cal T}_h}(\bw,
\nabla e_0)_T\\
 \leq &\sum_{T\in {\cal T}_h}\|\bw\|_{T}\|\nabla
e_0\|_{T}+\sum_{T\in {\cal T}_h}\|\bw\|_{\partial
T}\|e_0-e_b\|_{\partial T}.
\end{split}
\end{equation*}
The trace inequality (\ref{trace-inequality}) implies
$$
\|\bw\|_{\partial T}^2\leq C(h_T^{-1}\|\bw\|_{T}+h_T\|\nabla
\bw\|_{T})\leq Ch_T^{-1}\|\bw\|_{1,T}^2.
$$
Thus, from Cauchy-Schwarz and the regularity (\ref{aaa}) we obtain
\begin{equation*}
\begin{split}
\sum_{T\in {\cal T}_h}(e_0,e_0)_T
 \leq& \sum_{T\in {\cal T}_h}\|\bw\|_{1,T}\|\nabla
e_0\|_{T}+\sum_{T\in {\cal T}_h}Ch_T^{-\frac12}\|\bw\|_{1, T}\|e_0-e_b\|_{\partial T}\\
 \leq &  C\Big(\sum_{T\in {\cal T}_h}\|\nabla
e_0\|^2_{T}+\sum_{T\in {\cal T}_h}h_T^{-1 } \|e_0-e_b\|^2_{\partial
T}\Big)^{\frac12} \|\phi\|_2\\
\leq &  C\Big(\sum_{T\in {\cal T}_h}\|\nabla e_0\|^2_{T}+\sum_{T\in
{\cal T}_h}h_T^{-1 } \|e_0-e_b\|^2_{\partial
T}\Big)^{\frac12} \|e_0\|,\\
\end{split}
\end{equation*}
which verifies the estimate (\ref{5.45}).
\end{proof}

The following is another version of the Poincar\'e inequality for
functions in $V_h^0$.

\begin{lemma}\label{Lemma6.3} There exists a constant $C$ such that
 \begin{equation}\label{5.44}
  \Big(\sum_{T\in {\cal T}_h}\|\nabla e_0\|^2_{T}\Big)^{\frac{1}{2}}\leq C\3bare_h\3bar
\end{equation}
for all $e_h\in V_h^0$.
\end{lemma}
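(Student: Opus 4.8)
The plan is to control the first-order derivatives of $e_0$ by applying the first Poincar\'e inequality (\ref{5.45}) separately to each partial derivative $\partial_i e_0$, $i=1,\ldots,d$, and then to reduce the resulting right-hand side to quantities already dominated by $\3bare_h\3bar$. The point is that $\3bare_h\3bar$ already controls the full Hessian of $e_0$ via (\ref{ap1}), so what is missing is precisely a Poincar\'e-type passage from second derivatives down to first derivatives.

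First I would observe that, for each fixed $i$, the pair $\{\partial_i e_0, e_{gi}\}$ is an admissible object for (\ref{5.45}): its interior component $\partial_i e_0$ lies in $P_{k-1}(T)\subset P_k(T)$, its boundary component $e_{gi}$ lies in $P_{k-2}(e)$ and is single-valued on every interior edge/face (being a component of $\textbf{e}_g\in V_h$), and $e_{gi}=0$ on $\partial\Omega$ since $e_h\in V_h^0$. Hence the interelement cancellation used in the proof of (\ref{5.45}) goes through verbatim, and applying that inequality with $e_0$ replaced by $\partial_i e_0$ and $e_b$ replaced by $e_{gi}$ yields
$$\sum_{T\in\T_h}\|\partial_i e_0\|_T^2 \le C\Big(\sum_{T\in\T_h} \|\nabla(\partial_i e_0)\|_T^2 + \sum_{T\in\T_h} h_T^{-1}\|\partial_i e_0 - e_{gi}\|_{\partial T}^2\Big).$$
Summing over $i$ turns the left-hand side into $\sum_T\|\nabla e_0\|_T^2$ and the first term on the right into $\sum_T |e_0|_{2,T}^2$, which is bounded by $C\3bare_h\3bar^2$ by (\ref{ap1}).

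It then remains to bound the jump term $\sum_T\sum_i h_T^{-1}\|\partial_i e_0 - e_{gi}\|_{\partial T}^2$. Here I would insert $Q_b(\partial_i e_0)$ and split, via the triangle inequality, into a projection-error part $\|\partial_i e_0 - Q_b(\partial_i e_0)\|_{\partial T}$ and a stabilizer part $\|Q_b(\partial_i e_0)-e_{gi}\|_{\partial T}$. Summed over $i$ and $T$ against $h_T^{-1}$, the stabilizer part is exactly $\sum_T h_T^{-1}\|Q_b(\nabla e_0)-\textbf{e}_g\|_{\partial T}^2$, which is one of the three pieces of $\3bare_h\3bar^2$ and hence already controlled. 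For the projection-error part I would use the standard first-order approximation property of $Q_b$ on each edge/face, $\|\partial_i e_0 - Q_b(\partial_i e_0)\|_{\partial T}\le Ch_T\,|\partial_i e_0|_{1,\partial T}$, bound the tangential seminorm by the full gradient $\|\nabla(\partial_i e_0)\|_{\partial T}$, and pass from $\partial T$ back to $T$ by the polynomial trace/inverse inequality (\ref{x}); this gives $h_T^{-1}\|\partial_i e_0 - Q_b(\partial_i e_0)\|_{\partial T}^2 \le C\|\nabla(\partial_i e_0)\|_T^2 \le C|e_0|_{2,T}^2$, again bounded by $C\3bare_h\3bar^2$ after summation. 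Collecting these estimates proves (\ref{5.44}).

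I expect the main obstacle to be the justification that (\ref{5.45}) may legitimately be invoked for the derivative pair $\{\partial_i e_0, e_{gi}\}$ rather than for a genuine element of $V_h^0$ in the original variables: one must check the single-valuedness of $e_{gi}$ across interior faces and its vanishing on $\partial\Omega$, since these are exactly what make the interelement boundary terms in the proof of (\ref{5.45}) cancel. Once that reduction is granted, together with the bookkeeping that converts the edge projection error of $\partial_i e_0$ into a pure Hessian bound, the remaining steps are routine applications of (\ref{ap1}), the trace/inverse inequality (\ref{x}), and the definition of the stabilizer in $\3bare_h\3bar^2$.
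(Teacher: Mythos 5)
Your proposal is correct and takes essentially the same route as the paper's own proof: the paper likewise applies the Poincar\'e inequality (\ref{5.45}) with $e_0$ replaced by $\nabla e_0$ and $e_b$ by $\textbf{e}_g$ (legitimate because $\textbf{e}_g$ is single-valued on interior faces and vanishes on $\partial\Omega$ for $e_h\in V_h^0$), inserts $Q_b(\nabla e_0)$ via the triangle inequality so that one piece is the stabilizer part of $\3bar e_h\3bar^2$, and bounds the remaining projection-error piece and the Hessian term $\sum_{T}|e_0|^2_{2,T}$ using (\ref{ap1}). Your componentwise formulation and the explicit verification of admissibility and of the edge projection estimate are simply more detailed renderings of the same steps the paper leaves implicit.
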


\begin{proof} Since $e_h\in V_h^0$, then we have $\textbf{e}_g=0$.
Thus, an application of (\ref{5.45}) with $e_0$ replaced by $\nabla
e_0$ yields
\begin{equation}\label{33}
\begin{split}
\sum_{T\in {\cal T}_h}\|\nabla e_0\|^2_{T}&\leq C \Big( \sum_{T\in
{\cal T}_h} |e_0|^2_{2,T}+\sum_{T\in {\cal T}_h} h_T^{-1 }\|\nabla
e_0-\textbf{e}_g\|^2_{\partial T}\Big).
\end{split}
\end{equation}
For the second term on the right-hand side of (\ref{33}), we have
\begin{equation}\label{44}
\begin{split}
&\sum_{T\in {\cal T}_h} h_T^{-1 }\|\nabla e_0-\textbf{e}_g\|^2_{\partial T}\\
\leq& 2\sum_{T\in {\cal T}_h} h_T^{-1}\|\nabla e_0-Q_b(\nabla
e_0)\|^2_{\partial T}+2\sum_{T\in {\cal T}_h} h_T^{-1}\|Q_b(\nabla
e_0)-\textbf{e}_g\|^2_{\partial T}.
\end{split}
\end{equation}
Substituting (\ref{44}) into (\ref{33}) yields
\begin{equation*}
\sum_{T\in {\cal T}_h}\|\nabla e_0\|^2_{T} \leq C\sum_{T\in {\cal
T}_h}|e_0|^2_{2,T} + C\3bar e_h\3bar^2\leq C\3bar e_h\3bar^2,
\end{equation*}
where we have used (\ref{ap1}) in the last inequality.
\end{proof}

\begin{lemma}\label{Lemma6.4}
For quadratic element $k=2$, we  assume that the exact solution $u$
of (\ref{0.1}) is sufficiently regular such that $u\in H^4(\Omega)$.
There exists a constant $C$ such that the following inequality holds
true:
\begin{equation}\label{Fork=2}
\begin{split}
\left|\sum_{T\in {\cal
T}_h}\sum_{i,j=1}^d\langle\partial_j(\partial^2_{ij} u-{\cal
Q}_h\partial^2_{ij} u)\cdot n_i, e_0-e_b\rangle_{\partial
T}\right|\leq Ch \|u\|_4\ \3bare_h\3bar.
\end{split}
\end{equation}
\end{lemma}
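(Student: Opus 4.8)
The plan is to take advantage of the lowest-order structure $k=2$, where $\Q_h$ is the $L^2$ projection onto $P_{k-2}(T)=P_0(T)$, so $\Q_h\partial^2_{ij}u$ is constant on each $T$ and therefore $\partial_j(\partial^2_{ij}u-\Q_h\partial^2_{ij}u)=\partial_j\partial^2_{ij}u=:g_{ij}$ is a genuine third-order derivative of $u$. Writing $e_0-e_b=(e_0-Q_be_0)+(Q_be_0-e_b)$, I would split the sum in (\ref{Fork=2}) into two pieces and estimate them separately, the piece with $Q_be_0-e_b$ being absorbed by the stabilizer and the piece with $e_0-Q_be_0$ requiring the orthogonality of $Q_b$. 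Throughout I use that $e_h\in V_h^0$, so that Lemma \ref{Lemma6.3} and (\ref{ap1}) are available.

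For the piece carrying $Q_be_0-e_b$, the Cauchy--Schwarz inequality with weights $h_T^{3/2}$ and $h_T^{-3/2}$ gives
\[
\Big|\sum_{T\in\T_h}\sum_{i,j=1}^d\langle g_{ij}n_i,Q_be_0-e_b\rangle_{\partial T}\Big|\le\Big(\sum_{T\in\T_h}\sum_{i,j=1}^d h_T^3\|g_{ij}\|_{\partial T}^2\Big)^{\frac12}\Big(\sum_{T\in\T_h}h_T^{-3}\|Q_be_0-e_b\|_{\partial T}^2\Big)^{\frac12}.
\]
Since $\Q_h\partial^2_{ij}u$ is constant, the first factor is dominated by the left side of (\ref{3.6}), so taking $m=2$ there bounds it by $Ch(\|u\|_3+h\|u\|_4)\le Ch\|u\|_4$; the second factor is bounded by $\3bar e_h\3bar$ straight from the definitions of $s(\cdot,\cdot)$ and $\3bar\cdot\3bar$. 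Hence this piece is $\le Ch\|u\|_4\3bar e_h\3bar$.

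For the piece carrying $e_0-Q_be_0$, I would use on each edge $e$ that $e_0-Q_be_0\perp P_0(e)$ together with the fact that $n_i$ is constant on $e$, so that $\langle g_{ij}n_i,e_0-Q_be_0\rangle_e=\langle g_{ij}n_i-Q_b(g_{ij}n_i),e_0-Q_be_0\rangle_e$; Cauchy--Schwarz with weights $h_T^{1/2}$, $h_T^{-1/2}$ then reduces matters to bounding $\big(\sum_{T,i,j}h_T\|g_{ij}n_i-Q_b(g_{ij}n_i)\|_{\partial T}^2\big)^{1/2}$ and $\big(\sum_T h_T^{-1}\|e_0-Q_be_0\|_{\partial T}^2\big)^{1/2}$. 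The main obstacle is the first of these: the naive route of estimating $g_{ij}-Q_bg_{ij}$ by a tangential derivative along $e$ would cost $\partial^4u$ on the edge and ultimately force $u\in H^5$, more than assumed. I circumvent this by using that $Q_b(g_{ij}n_i)$ is the best $L^2$ constant on $e$ and comparing instead with the elementwise mean $\overline{g_{ij}}$ of $g_{ij}$ over $T$:
\[
\|g_{ij}n_i-Q_b(g_{ij}n_i)\|_e\le\|g_{ij}-\overline{g_{ij}}\|_e,
\]
after which the trace inequality (\ref{trace-inequality}) and the elementwise Poincar\'e inequality $\|g_{ij}-\overline{g_{ij}}\|_T\le Ch_T\|\nabla g_{ij}\|_T$ yield $\|g_{ij}-\overline{g_{ij}}\|_{\partial T}^2\le Ch_T\|\nabla g_{ij}\|_T^2\le Ch_T\|u\|_{4,T}^2$, which uses only $\partial^4u\in L^2$. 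Summation gives $\big(\sum_{T,i,j}h_T\|g_{ij}n_i-Q_b(g_{ij}n_i)\|_{\partial T}^2\big)^{1/2}\le Ch\|u\|_4$.

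For the remaining factor I would apply the same best-constant idea to $e_0$, comparing $Q_be_0$ with the $T$-mean $\overline{e_0}$ and using (\ref{trace-inequality}) together with the elementwise Poincar\'e inequality to get $h_T^{-1}\|e_0-Q_be_0\|_{\partial T}^2\le C\|\nabla e_0\|_T^2$; summing and invoking Lemma \ref{Lemma6.3} bounds it by $C\3bar e_h\3bar$. Combining the two pieces through the triangle inequality then produces $Ch\|u\|_4\3bar e_h\3bar$, which is exactly (\ref{Fork=2}).
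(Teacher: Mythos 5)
Your proposal is correct and follows essentially the same route as the paper's proof: the same observation that $\mathcal{Q}_h\partial^2_{ij}u$ is constant for $k=2$, the same splitting $e_0-e_b=(e_0-Q_be_0)+(Q_be_0-e_b)$ with the stabilizer absorbing the second piece, and the same orthogonality trick of inserting $(I-Q_b)$ on the coefficient followed by trace/Poincar\'e estimates and Lemma \ref{Lemma6.3} for the first piece. The only differences are cosmetic (citing (\ref{3.6}) rather than re-deriving it, and distributing $h_T^{\pm 1/2}$ weights in the Cauchy--Schwarz step where the paper uses none), so the two arguments coincide in substance.
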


\begin{proof} Since ${\cal Q}_h$ is the local $L^2$ projection onto
$P_{0}(T)$, then we have
 \begin{equation}\label{e}
\begin{split}
 &\langle\partial_j(\partial^2_{ij}u-{\cal Q}_h\partial^2_{ij} u)\cdot n_i,
 e_0-e_b\rangle_{\partial T}\\
=&\langle\partial_j\partial^2_{ij}u\cdot
n_i,e_0-e_b\rangle_{\partial T}
 \\
 =&\langle\partial_j\partial^2_{ij} u \cdot n_i, e_0-Q_be_0 \rangle_{\partial T}
 +\langle\partial_j\partial^2_{ij}u\cdot n_i,
 Q_be_0-e_b\rangle_{\partial T}\\
=&\langle(I-Q_b)\partial_j\partial^2_{ij} u \cdot n_i, e_0-Q_be_0
\rangle_{\partial T}
 +\langle\partial_j\partial^2_{ij}u\cdot n_i, Q_be_0-e_b\rangle_{\partial
 T}\\
 =& J_1 + J_2.
\end{split}
\end{equation}
For the second term $J_2$, by using Cauchy-Schwarz inequality, trace
inequality (\ref{trace-inequality}) and (\ref{a}), we have
\begin{equation}\label{f}
\begin{split}
&\left|\sum_{T\in {\cal T}_h}\sum_{i,j=1}^d\langle\partial_j\partial^2_{ij}u\cdot n_i, Q_be_0-e_b\rangle_{\partial T}\right|\\
\leq & \Big(\sum_{T\in {\cal
T}_h}\sum_{i,j=1}^dh_T^3\|\partial_j\partial^2_{ij}u\|^2_{\partial
T}\Big)^{\frac{1}{2}}
\Big(\sum_{T\in {\cal T}_h}h_T^{-3}\|Q_0e_0-e_b\|^2_{\partial T}\Big)^{\frac{1}{2}}\\
\leq & C\Big(\sum_{T\in {\cal T}_h}h_T^3\big(h_T|u|^2_{4,T}+h_T^{-1}|u|^2_{3,T}\big)\Big)^{\frac{1}{2}}\3bare_h\3bar\\
\leq & Ch\big(\|u\|_3+h\|u\|_4\big)\3bare_h\3bar.
\end{split}
\end{equation}

As to the first term $J_1$, by using Cauchy-Schwarz inequality,
trace inequality (\ref{trace-inequality}), (\ref{x}), and Lemma
\ref{Lemma6.3}, we arrive at
\begin{equation}\label{bb}
\begin{split}
&\left|\sum_{T\in {\cal
T}_h}\sum_{i,j=1}^d\langle(I-Q_b)\partial_j\partial^2_{ij}u\cdot n_i
,e_0-Q_be_0\rangle_{\partial T}\right|\\
\leq &\Big(\sum_{T\in {\cal
T}_h}\sum_{i,j=1}^d\|(I-Q_b)\partial_j\partial^2_{ij}u\|_{\partial
T}^2\Big)^{\frac{1}{2}}
\Big(\sum_{T\in {\cal T}_h} \|e_0-Q_be_0\|_{\partial T}^2\Big)^{\frac{1}{2}}\\
 \leq &   C\Big(\sum_{T\in {\cal T}_h}h_T  |  u|_{4,T}^2\Big)^{\frac{1}{2}}\Big(\sum_{T\in {\cal T}_h}h_T   |e_0|_{1,  T}^2\Big)^{\frac{1}{2}}\\
 \leq &   Ch\|u\|_4\Big(\sum_{T\in {\cal T}_h}  |e_0|^2_{1,  T}\Big)^{\frac{1}{2}}
\leq C h\|u\|_4 \3bare_h\3bar.\\
 \end{split}
\end{equation}
Combining all the above inequalities gives rise to the desired
estimate (\ref{Fork=2}).
\end{proof}

\begin{lemma}\label{Lemma6.5} There exists a constant $C$ such
that the following inequality holds true:
$$
\Big(\sum_{T\in {\cal
T}_h}\sum_{i,j=1}^dh_T^{-1}\|(\partial_ie_0-e_{gi})\cdot
n_j\|_{\partial T}^2\Big)^{\frac{1}{2}}\leq C \3bare_h\3bar
$$
for any $e_h\in V_h^0$.
\end{lemma}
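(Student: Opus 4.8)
The plan is to reduce the left-hand side to two contributions: one that is literally part of the stabilizer $s(e_h,e_h)$, and one that measures how far $\nabla e_0$ sits from its own edge projection, which I will control by the elementwise $H^2$-seminorm of $e_0$ and hence by $\3bar e_h\3bar$ through the estimate (\ref{ap1}).

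First I would dispose of the normal factor $n_j$. Since $\mathbf{n}$ is the unit outward normal, $\sum_{j=1}^d n_j^2=1$ pointwise on $\partial T$, so for each fixed $i$
\begin{equation*}
\sum_{j=1}^d\|(\partial_i e_0-e_{gi})n_j\|_{\partial T}^2
=\int_{\partial T}(\partial_i e_0-e_{gi})^2\Big(\sum_{j=1}^d n_j^2\Big)\,ds
=\|\partial_i e_0-e_{gi}\|_{\partial T}^2 .
\end{equation*}
Summing over $i$ and $T$, the quantity to be estimated equals $\sum_{T\in\T_h}h_T^{-1}\|\nabla e_0-\mathbf{e}_g\|_{\partial T}^2$. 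Writing $\nabla e_0-\mathbf{e}_g=(\nabla e_0-Q_b(\nabla e_0))+(Q_b(\nabla e_0)-\mathbf{e}_g)$ and applying the triangle inequality, it suffices to bound the two sums
\begin{equation*}
\sum_{T\in\T_h}h_T^{-1}\|Q_b(\nabla e_0)-\mathbf{e}_g\|_{\partial T}^2
\qquad\text{and}\qquad
\sum_{T\in\T_h}h_T^{-1}\|\nabla e_0-Q_b(\nabla e_0)\|_{\partial T}^2 .
\end{equation*}
The first sum is exactly one of the two terms in $s(e_h,e_h)$, hence it is bounded by $\3bar e_h\3bar^2$ by the definition (\ref{3barnorm}).

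Next I would treat the approximation sum. Fix a component $g=\partial_i e_0$, a polynomial of degree $k-1$ on $T$. Because $P_{k-2}(e)\supseteq P_0(e)$ for every $k\ge2$, the $L^2(e)$ projection $Q_b$ reproduces constants, so the standard Poincar\'e/approximation estimate on each edge (face) gives $\|g-Q_bg\|_e\le C h_e|g|_{1,e}\le C h_T\|\nabla g\|_e$, using $h_e\le h_T$ by shape regularity and bounding the tangential derivative by the full gradient. Applying the inverse trace inequality (\ref{x}) to the polynomial $\nabla g$ yields $\|\nabla g\|_{\partial T}^2\le Ch_T^{-1}\|\nabla g\|_T^2$, and therefore
\begin{equation*}
h_T^{-1}\|\partial_i e_0-Q_b(\partial_i e_0)\|_{\partial T}^2
\le Ch_T^{-1}\,h_T^2\,\|\nabla(\partial_i e_0)\|_{\partial T}^2
\le C\|\nabla(\partial_i e_0)\|_T^2\le C|e_0|_{2,T}^2 .
\end{equation*}
Summing over $i$ and $T$ and invoking the squared form of (\ref{ap1}), namely $\sum_{T\in\T_h}|e_0|_{2,T}^2\le C\3bar e_h\3bar^2$ obtained by summing the pointwise bound (\ref{s}), closes the argument.

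The main obstacle is the bookkeeping of the powers of $h_T$ in the approximation sum: the prefactor $h_T^{-1}$ must be absorbed exactly so that a scale-invariant bound $C|e_0|_{2,T}^2$ survives. This relies on pairing the edge approximation estimate, which contributes a factor $h_T^2$, with the inverse trace inequality, which contributes $h_T^{-1}$, and crucially on the fact that $Q_b$ reproduces constants, so that the $O(h_T)$ approximation order is available even in the lowest-order case $k=2$ where $g=\partial_i e_0$ is only linear and $Q_b$ projects onto constants.
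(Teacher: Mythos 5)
Your proof is correct and takes essentially the same route as the paper's: both split $\partial_i e_0 - e_{gi}$ via the triangle inequality into the stabilizer part $Q_b(\partial_i e_0)-e_{gi}$ (absorbed directly into $s(e_h,e_h)$) and the projection error $\partial_i e_0 - Q_b(\partial_i e_0)$, which is bounded by $Ch_T|e_0|_{2,T}^2$ and then controlled through the elementwise $H^2$-seminorm estimate (\ref{ap1}). Your treatment is slightly more explicit than the paper's (the exact identity $\sum_j n_j^2=1$ in place of $|n_j|\le 1$, the spelled-out pairing of the edge approximation estimate with the inverse trace inequality (\ref{x}), and the observation that what is really used is the squared form $\sum_{T}|e_0|_{2,T}^2\le C\3bar e_h\3bar^2$ coming from (\ref{s})), but these are refinements of the same argument, not a different one.
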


\begin{proof} From the triangle inequality, we have
\begin{equation}
\begin{split}\label{a2}
&\|(\partial_ie_0-e_{gi})\cdot n_j\|_{\partial T}^2
\leq  \|\partial_ie_0-e_{gi}\|_{\partial T}^2\\
\leq & 2\Big(\|\partial_ie_0-Q_{b}(\partial_ie_0)\|^2_{\partial T}+
\|Q_{b}(\partial_ie_0)-e_{gi}\|^2_{\partial T}\Big)\\
\leq &
Ch_T|e_0|_{2,T}^2+2\|Q_{b}(\partial_ie_0)-e_{gi}\|^2_{\partial T}.
 \end{split}
\end{equation}
Thus,
\begin{equation*}
\begin{split}
\sum_{T\in {\cal
T}_h}\sum_{i,j=1}^dh_T^{-1}\|(\partial_ie_0-e_{gi})\cdot
n_j\|_{\partial T}^2&\leq C \sum_{T\in\T_h} \left(|e_0|_{2,T}^2 +
h_T^{-1}\|Q_{b}(\partial_ie_0)-e_{gi}\|^2_{\partial T}\right)\\
&\leq C\3bar e_h\3bar^2.
\end{split}
\end{equation*}
This completes the proof of the lemma.
\end{proof}


\end{document}